\newtheorem{theorem}{Theorem}[section]
\newtheorem{corollary}[theorem]{Corollary}
\newtheorem{lemma}[theorem]{Lemma}
\newtheorem{proposition}[theorem]{Proposition}
\theoremstyle{remark}
\newtheorem{definition}[theorem]{Definition}
\newtheorem{remark}[theorem]{Remark}
\theoremstyle{definition}
\newtheorem{algorithm}[theorem]{Algorithm}
\newlength{\algindent}\settowidth{\algindent}{\textit{Output}:\hskip0.5em }
\newlength{\alglabel}\settowidth{\alglabel}{\textit{Output}:}
\newcounter{stepcount}
\newenvironment{alglist}
{\quad\begin{list}{\arabic{stepcount}.}%
{\leftmargin=\algindent\labelwidth=\algindent\itemsep=\smallskipamount\usecounter{stepcount}}}
{\end{list}}
\newcommand{\algin}{\item[\emph{Input}:]}
\newcommand{\algout}{\item[\emph{Output}:]}
\newcounter{substepcount}
\newenvironment{algsublist}
{\quad\begin{list}{(\/\rlap{\alph{substepcount}}\phantom{d}\/)}{\usecounter{substepcount}}}
{\end{list}}
\DeclareMathOperator{\Aut}{Aut}
\DeclareMathOperator{\Cross}{Cross}
\DeclareMathOperator{\Gal}{Gal}
\DeclareMathOperator{\PGL}{PGL}
\DeclareMathOperator{\Sym}{Sym}
\DeclareMathOperator{\Zeros}{Zeros}
\newcommand{\BF}{{\mathbf F}}
\newcommand{\BP}{{\mathbf P}}
\newcommand{\alphabar}{\bar{\alpha}}
\newcommand{\betabar}{\bar{\beta}}
\newcommand{\rhobar}{\bar{\rho}}
\newcommand{\sigmabar}{\bar{\sigma}}
\newcommand{\kbar}{\bar{k}}
\newcommand{\kstar}{k^\times}
\newcommand{\BFbar}{\overline{\BF}}
\newcommand{\Rhom}{R_\textup{hom}}
\newcommand{\Otilde}{\lowerwidetilde{O}}   
\newcommand{\projthree}[3]{{[#1\,{:}\,#2\,{:}\,#3]}}
\newcommand{\projtwo}[2]{{[#1\,{:}\,#2]}}
\newcommand{\twobytwo}[4]{
\bigl[\begin{smallmatrix}#1&#2\\#3&#4\end{smallmatrix}\bigr]}
\newcommand{\pz}{\phantom{0}}               
\newcommand{\zz}{\phantom{.00}}             
\newcommand{\z}{\phantom{.0}}             
\newcommand{\sz}{\rlap{$^*$}\phantom{.0}} 
\newcommand{\rs}{\rlap{$^*$}}
\newcommand{\kmodsquares}{k^\times/k^{\times2}}
\newcommand{\mybar}[1]{
  \mathchoice
  {#1\llap{$\overline{\phantom{\displaystyle\rm#1}}$}}
  {#1\llap{$\overline{\phantom{\textstyle\rm#1}}$}}
  {#1\llap{$\overline{\phantom{\scriptstyle\rm#1}}$}}
  {#1\llap{$\overline{\phantom{\scriptscriptstyle\rm#1}}$}}
}  
\renewcommand{\bar}{\mybar}
\DeclareMathSymbol{\widetildesym}{\mathord}{largesymbols}{"65}
\newcommand\lowerwidetildesym{%
  \text{\smash{\raisebox{-1.4ex}{%
    $\widetildesym$}}}}
        \newcommand\lowerwidetilde[1]{%
          \mathchoice
            {\accentset{\displaystyle\lowerwidetildesym}{#1}}
            {\accentset{\textstyle\lowerwidetildesym}{#1}}
            {\accentset{\scriptstyle\lowerwidetildesym}{#1}}
            {\accentset{\scriptscriptstyle\lowerwidetildesym}{#1}}
}
\begin{document}

\title[Enumerating hyperelliptic curves]
      {Enumerating hyperelliptic curves\\ over finite fields in quasilinear time}
      
\author{Everett W. Howe}
\address{Independent mathematician, 
         San Diego, CA 92104, USA}
\email{\href{mailto:however@alumni.caltech.edu}{however@alumni.caltech.edu}}
\urladdr{\href{http://ewhowe.com}{http://ewhowe.com}}

\date{20 June 2024}

\keywords{Hyperelliptic curve, finite field}

\subjclass{Primary 11G20; Secondary 11Y16, 14G15, 14H10, 14H25}

\begin{abstract}
We present an algorithm that, for every fixed genus $g$, will enumerate all 
hyperelliptic curves of genus $g$ over a finite field $k$ of odd characteristic 
in quasilinear time; that is, the time required for the algorithm is 
$\Otilde(q^{2g-1})$, where $q=\#k$. Such an algorithm already exists in the
case $g=2$, thanks to work of Mestre and Cardona and Quer, and in the case
$g=3$, thanks to work of Lercier and Ritzenthaler. Experimentally, it appears
that our new algorithm is about two orders of magnitude faster in practice than
ones based on their work.
\end{abstract}

\maketitle


\section{Introduction}

There are many circumstances in which one may want to enumerate all 
hyperelliptic curves of a given genus over a given finite field.  One may wish
to determine whether a hyperelliptic curve with certain special properties 
exists --- for instance, with a certain number of
points~\cite[p.~393]{KodamaTopWashio2009}, or a certain zeta function, or a
certain $a$-number~\cite[\S4]{Frei2018}, or some other property of interest --- 
and explicit enumeration allows for a direct search. Or perhaps one may wish to
gather data about all hyperelliptic curves of a given genus over a given finite
field --- for example, in order to compute the distribution of the number of 
points on such curves, as in~\cite{BergstromHoweEtAl2024}, or to determine
experimental results~\cite{MaisnerNart2002} that can inspire future
theorems~\cite{HoweNartRitzenthaler2009}.

In this paper we present, for every fixed genus $g>1$, an algorithm to calculate
a list of all hyperelliptic curves of genus $g$ over a given finite field of odd
characteristic.\footnote{Hyperelliptic curves in characteristic~$2$ are
  different from those in other characteristics in some basic ways, and, as we
  discuss later, there already exists an efficient algorithm for enumerating
  them.}
  
\begin{theorem}
\label{T:main}
Fix an integer $g >1.$ Together, the algorithms we present in 
Section~\textup{\ref{S:enumerating}} provide a method for computing a complete 
list of hyperelliptic curves of genus~$g$ over $\BF_q$ for odd prime powers~$q$,
with each curve appearing exactly once up to isomorphism. The algorithms take
time $\Otilde(q^{2g-1})$ and space $O(q^{2g-1})$.
\end{theorem}

From~\cite[Proposition~7.1]{BrockGranville2001} and from the fact that a generic
hyperelliptic curve has automorphism group of order~$2$, we see that there are
roughly $2q^{2g-1}$ hyperelliptic curves of genus $g$ over~$\BF_q$, so our
algorithm runs in quasilinear time. 

We also present an apparently new explicit enumeration of all monic irreducible 
homogeneous bivariate quartics over a finite field $k$ of odd characteristic, up
to the natural action of $\PGL_2(k)$ (see Section~\ref{S:Weierstrass}), which 
may be of independent interest.

\begin{theorem}
\label{T:quartics}
Given an odd prime power~$q$, let $\gamma$ be a nonzero element of $\BF_{q^4}$
whose multiplicative order is $2(q^2-1)$ and let $\rho$ be an element of 
$\BF_{q^2}\setminus\BF_q$ with $\rho^2\in\BF_q$. Let $S_4$ be the union of the
two sets
\[
\bigl\{      (\gamma^i - 1)/(\gamma^i + 1) \ \vert\ \text{$i$ odd},\  1\le i\le (q+1)/2\bigr\}
\]
and
\[
\bigl\{ \rho (\gamma^i - 1)/(\gamma^i + 1) \ \vert\ \text{$i$ odd},\  1\le i\le (q-1)/2\bigr\}\,.
\]
Then the homogenized minimal polynomials of the elements of $S_4$ provide a
complete set of unique representatives for the action of $\PGL_2(\BF_q)$ on the
monic irreducible homogeneous bivariate quartics over $\BF_q$ .
\end{theorem}

We prove this in Section~\ref{S:quartics}. Also, here and elsewhere, when we say
we have a ``complete set of unique representatives'' for an action of a group on
a set, we mean that we have a set of orbit representatives that contains exactly
one member from each orbit.

Briefly, there are two main ideas behind the algorithms that give us
Theorem~\ref{T:main}. The first is that there is an easy way to tell whether two
irreducible homogeneous polynomials in $\BF_q[x,y]$ lie in the same orbit under 
the action of $\PGL_2(\BF_q)$ --- see Theorem~\ref{T:invariant}. We use this 
fact to reduce the problem of getting a list of all hyperelliptic curves
\emph{without} duplicates in quasilinear time to the problem of getting a list
of hyperelliptic curves with a \emph{bounded number} of duplicates in
quasilinear time. The second is that if one understands the cosets of 
$\PGL_2(\BF_q)$ in $\PGL_2(\BF_{q^2})$, and if one has a list of orbit 
representatives for $\PGL_2(\BF_{q^2})$ acting on irreducible homogenous
polynomials of degree~$n$ in $\BF_{q^2}[x,y]$, then one can get a list of orbit
representatives for $\PGL_2(\BF_q)$ acting on irreducible homogenous polynomials
of degree~$2n$ in $\BF_q[x,y]$ --- see Section~\ref{SS:n}. The point of this
observation is that the number of $\PGL_2(\BF_{q^2})$ orbits of irreducible
degree-$n$ polynomials over $\BF_{q^2}$ is on the order of $q^{2n-6}$, while the
number of $\PGL_2(\BF_q)$ orbits of degree-$2n$ polynomials over $\BF_q$ is
roughly $q^{2n-3}$, so the former is easier to compute than the latter.

The algorithm for enumerating hyperelliptic curves that we present is designed 
for simplicity of argument, rather than for efficiency of computation. In 
Section~\ref{S:speedups} we describe modifications that will make the algorithm
more efficient, and in Section~\ref{S:genus2} we present even more details and 
timings for the case of genus~$2$ and genus~$3$.

An obvious issue with our algorithm, as we present it here, is that it requires
$O(q^{2g-1})$ space. This is because our initial computations often produce some
duplicate entries, and we eliminate these duplicates by collecting all the
output, computing some invariants, and then discarding entries whose invariants 
have already been seen. In a followup paper~\cite{Howe2024}, we show how it is 
possible to modify the techniques presented here in order to get a quasilinear
time algorithm for computing genus-$g$ hyperelliptic curves over $\BF_q$ that
only requires $O(\log q)$ space. Our implementation of the genus-$2$ case of the 
algorithm from the present paper uses a basic version of the ideas
from~\cite{Howe2024}, and it would not be hard to modify the genus-$2$ code we
provide in~\cite{HoweRepo2024} so that it requires only $O(q)$ space.

We start by considering various reductions, special cases, and lemmas.
In Section~\ref{S:Weierstrass} we show that enumerating hyperelliptic curves of 
genus $g$ over a finite field $k$ of odd characteristic can be reduced to 
enumerating Galois-stable sets of $2g+2$ elements of~$\BP^1(\kbar)$ up to the
natural action of $\PGL_2(k)$. The rest of the paper is therefore concerned
mostly with the latter problem, which we solve for all finite fields, not just 
those of odd characteristic.
In Section~\ref{S:quartics} we prove Theorem~\ref{T:quartics}, as well as
similar results for quartics with one or two irreducible quadratic factors
and generalizations to characteristic~$2$.
In Section~\ref{S:invariant} we introduce an invariant for $\PGL_2(\BF_q)$
orbits of monic irreducible homogeneous bivariate polynomials of degree~$n\ge 4$
over~$\BF_q$, and we use it to provide a very straightforward algorithm for
giving a complete set of unique representatives for these orbits in 
time~$\Otilde(q^{n-2})$. 
In Sections~\ref{S:CosetRepsPGL2} and~\ref{S:CosetRepsPGLp} we give an explicit
enumeration of a complete set of unique representatives for the cosets of 
$\PGL_2(\BF_q)$ in $\PGL_2(\BF_{q^p})$ for primes~$p$. The case $p=2$ is the key
result needed for the most difficult case of our algorithm to enumerate
hyperelliptic curves.
In Section~\ref{S:enumerating} we use the results of the earlier sections to 
present a collection of algorithms that, together, give a complete set of unique
representatives for the $\PGL_2(\BF_q)$ orbits of Galois stable sets of ${2g+2}$
elements of $\BP^1(\BFbar_q)$.
We close with Sections~\ref{S:speedups} and~\ref{S:genus2}, described above.

For many of our algorithms we require an easily computable total ordering of
the elements of $\BF_q$ or of polynomials in $\BF_q[x]$ or in $\BF_q[x,y]$. We
will always denote such an ordering by ``$<$'' and we leave the reader to choose
their favorite one. Also, if the proof of a proposition is clear, we indicate
that the proof will be skipped by including an end-of-proof mark in the 
statement of the result.

\subsubsection*{Acknowledgements}
I am grateful to the referees for ANTS, who provided helpful feedback that 
improved this paper.

\section{Hyperelliptic curves and Weierstrass points}
\label{S:Weierstrass}

We begin by setting some general notation. Given a finite field~$k$, let $R$ be
the graded polynomial ring $k[x,y]$, with the grading given by the degree. For
each $n$ let $R_{n}$ be the set of homogeneous polynomials in $R$ of degree $n$
and let $\Rhom$ be the union of the $R_{n}$. We say that $f\in \Rhom$ is
\emph{monic} if $f(x,1)$ is monic as a univariate polynomial, and we say that
$f$ is \emph{separable} if in $\kbar[x,y]$ it can be written as a constant times
a product of distinct monic linear factors. We say that $\alpha\in\kbar$ is a 
\emph{root} of $f$ if $f(\alpha,1) = 0$, and that 
$\projtwo{\alpha}{\beta}\in\BP^1(\kbar)$ is a \emph{zero} of $f$ if 
$f(\alpha,\beta) = 0$.

We also define a left action of $\PGL_2(k)$ on $\Rhom/\kstar$ as follows. If 
$\Gamma$ is an element of $\PGL_2(k)$ represented by a matrix 
$M\colonequals \twobytwo{a}{b}{c}{d}$ and if $f$ lies in $\Rhom$, we define
$\Gamma(f\bmod\kstar)$ to be the class in $\Rhom/\kstar$ of $f(dx-by,-cx+ay)$. 

Every class of $\Rhom/\kstar$ contains a unique monic element, and we define 
an action of $\PGL_2(k)$ on the monic elements of $\Rhom$ by writing 
$\Gamma(f) = g$ when $g$ is the monic element of $\Gamma(f \bmod \kstar)$. If
$\Gamma$ is represented by a matrix $M\colonequals\twobytwo{a}{b}{c}{d}$ and if
$\Gamma(f) = g$, then $f(dx-by,-cx+ay) = e g(x,y)$ for some $e\in \kstar$, and 
if we choose a different matrix to represent $\Gamma$, then the constant $e$
will be multiplied by an $n$th power. When $n$ is even, the class of $e$ in 
$\kmodsquares$ therefore depends only on $\Gamma$, and we denote this square 
class by~$s_{\Gamma,f}$.

Given a separable polynomial $f\in \Rhom$, we let $\Zeros(f)$ denote the set of
zeros of $f$ in $\BP^1(\kbar)$, so that $\Zeros(f)$ consists of the roots of $f$
under the usual inclusion $\kbar\subset\BP^1(\kbar)$ given by 
$\alpha\mapsto \projtwo{\alpha}{1}$, together with 
$\infty\colonequals\projtwo{1}{0}\in\BP^1(\kbar)$ if $f$ is divisible by~$y$.
For every integer $n\ge 0$ we let $\Sym^n(k)$ denote the set of all 
Galois-stable sets of $n$ distinct points in $\BP^1(\kbar)$, so that the natural
action of $\PGL_2(k)$ on $\BP^1(\kbar)$ leads to an action of $\PGL_2(k)$ on 
$\Sym^n(k)$. We see that $\Zeros$ gives us a bijection between the set of monic 
separable polynomials in $R_{n}$ and the set $\Sym^n(k)$, and we check that
$\Zeros(\Gamma(f)) = \Gamma(\Zeros(f))$ for all $f\in\Rhom$.

Our goal in this paper is to produce an algorithm to enumerate hyperelliptic
curves of a given genus~$g$ over finite fields $k$ of odd characteristic. As a
first step, we reduce this problem to the problem of computing representatives
for all $\PGL_2(k)$ orbits of $\Sym^n(k)$, where $n=2g+2$.

Let $k$ be a finite field of odd characteristic and let $C$ be a hyperelliptic
curve over~$k$, that is, a curve of genus~$g>1$ with a degree-$2$ map to a curve
of genus~$0$. Every genus-$0$ curve over a finite field is isomorphic 
to~$\BP^1$, and since our $k$ has odd characteristic $C$ can be written in the
form $z^2 = \tilde{f}$, where $\tilde{f}\in k[x]$ is a separable polynomial of
degree $2g+1$ or $2g+2$. We can rewrite this as a model $z^2 = f(x,y)$ in
weighted projective space by homogenizing $\tilde{f}$ into a polynomial
$f\in R_{2g+2}$; here we give the coordinates $x$ and $y$ weight $1$ and the 
coordinate $z$ weight $g+1$. Then the map 
$\projthree{x}{y}{z}\mapsto\projtwo{x}{y}$ gives the double cover $C\to\BP^1$,
and the points of $\BP^1$ that ramify in this map are exactly the elements of
$\Zeros(f)$. If $C_1$ and $C_2$ are two hyperelliptic curves, given by equations
$z^2 = f_1$ and $z^2 = f_2$, then every isomorphism from $C_1$ to $C_2$ can be
written in the form
\[
\projthree{x}{y}{z} \mapsto \projthree{ax+by}{{cx+dy}}{ez}\,,
\]
where $ad-bc$ and $e$ are nonzero, and where 
\begin{equation}
\label{EQ:HEisomorphism}
e^2 f_1(x,y) = f_2(ax+by,cx+dy)\,;
\end{equation}
see~\cite[Corollary~7.4.33]{Liu2002}. Then 
$e^2f_1(dx-by,-cx+ay) = (ad-bc)^{2g+2}f_2(x,y)$, so we see that
$f_2\bmod \kstar = \Gamma(f_1\bmod \kstar)$, where $\Gamma\in\PGL_2(k)$ is the 
element represented by the matrix $M\colonequals\twobytwo{a}{b}{c}{d}$. Thus, if
$C_1$ and $C_2$ are isomorphic, the element $\Gamma$ of $\PGL_2(k)$ takes the 
ramification points of $C_1\to\BP^1$ to the ramification points of 
$C_2\to\BP^1$. Conversely, if $\twobytwo{a}{b}{c}{d}\in \PGL_2(k)$ takes the 
ramification points of $C_1\to\BP^1$ to the ramification points of
$C_2\to\BP^1$, then there is an $e\in\kbar$, with $e^2\in k$, that 
makes~\eqref{EQ:HEisomorphism} hold. If $e$ lies in~$k$, we have an isomorphism 
between $C_1$ and $C_2$; if $e$ does not lie in $k$, we have an isomorphism
between $C_1$ and the \emph{quadratic twist} of~$C_2$, that is, the curve 
$y^2 = \nu f_2$, where $\nu$ is a nonsquare in~$k$. (In general, a \emph{twist}
of a curve $C$ over a finite field $k$ is another curve over $k$ that becomes 
isomorphic to $C$ when the base field is extended to an algebraic closure 
of~$k$. Twists of $C$ correspond to elements of the cohomology set 
$H^1(\Gal(\kbar/k),\Aut_{\kbar} C)$ --- see~\cite[\S III.1.3]{Serre1994} --- and
by ``the quadratic twist'' of a hyperelliptic curve we mean the twist 
corresponding to the cocycle that sends the Frobenius element of $\Gal(\kbar/k)$
to the hyperelliptic involution. Note that sometimes the quadratic twist may in
fact be the trivial twist.)

Thus, we have a map from the set of isomorphism classes of genus-$g$ 
hyperelliptic curves over $k$ to the set of $\PGL_2(k)$ orbits of $\Sym^n(k)$,
where $n=2g+2$. This map is clearly surjective, and the $\PGL_2(k)$ orbit of an 
element $A$ of $\Sym^n(k)$ has at most two preimages in the set of isomorphism
classes of hyperelliptic curves: the isomorphism classes of $y^2 = f$ and of
$y^2=\nu f$, where $f$ is the unique monic polynomial with $\Zeros(f) = A$ and 
where $\nu$ is a nonsquare in~$k$. (We say ``at most two'' preimages because,
as we noted above, these two curves may be isomorphic to one another.)

Whether an element $A$ of $\Sym^n(k)$ has one or two preimages is easy to
determine: Let $f\in R_n$ be the unique monic polynomial with $\Zeros(f) = A$.
Compute all elements $\Gamma$ of $\PGL_2(k)$ that take the set $A$ to itself;
at worst this takes time ${O(n(n-1)(n-2))}$, and since $n$ is fixed in our
context, this is $O(1)$ operations. For each such $\Gamma$ compute the element
$s_{\Gamma,f}$ of~$\kmodsquares$. If any of these elements is nontrivial, then
the curve $y^2 = f$ is isomorphic to its twist $y^2 = \nu f$. (Compare 
to~\cite[Lemma~1.2]{Nart2009}.)

This shows that if we can compute a complete set of unique representatives for 
the orbits of $\PGL_2(\BF_q)$ acting on $\Sym^{2g+2}(\BF_q)$ in time 
$\Otilde(q^{2g-1})$, we can also compute a complete set of unique 
representatives for the hyperelliptic curves of genus $g$ over $\BF_q$ in time 
$\Otilde(q^{2g-1})$. Thus, for the rest of this paper we focus on enumerating
the $\PGL_2(k)$ orbits of $\Sym^n(k)$, for $n=2g+2$. In particular, for our 
application to enumerating hyperelliptic curves we only need to consider even 
$n$ that are at least~$6$. The algorithms that we present for the latter problem
work for all finite fields, not just those of odd characteristic.

\begin{remark}
Over a finite field $\BF_q$ of characteristic~$2$ there are still roughly
$2q^{2g-1}$ hyperelliptic curves of genus~$g$, but it is much easier to
enumerate them in quasilinear time than it is in odd characteristic, because the
ramification divisor of the hyperelliptic structure map $C\to\BP^1$ is supported
on at most $g+1$ points. Enumerating the possible ramification divisors up to
the action of $\PGL_2(\BF_q)$ in time $\Otilde(q^{2g-1})$ is therefore much 
simpler than in odd characteristic, and enumerating the curves with a given
ramification divisor is relatively straightforward. The algorithm of 
Xarles~\cite{Xarles2020} follows this outline; it has been implemented by him in
genus~$4$, by Dragutinovi\'{c}~\cite{Dragutinovic2024} in genus~$5$, and by 
Huang, Kedlaya, and Lau~\cite{HuangKedlayaLau2024} in genus~$6$.
\end{remark}

\section{Results for quartic polynomials}
\label{S:quartics}

In this section we prove Theorem~\ref{T:quartics}. We also prove similar results
that give complete sets of unique representatives for the action of 
$\PGL_2(\BF_q)$ on monic homogeneous quartics that have one or two irreducible
quadratic factors, and we state generalizations to finite fields of
characteristic~$2$. We begin with an elementary lemma.

\begin{lemma}
\label{L:involution}
Let $k$ be a field and let $a$, $b$, $c$, and $d$ be distinct elements 
of~$\BP^1(k)$. Then there is a unique element of $\PGL_2(k)$ that swaps $a$ with
$b$ and $c$ with~$d$, and this element is an involution.
\end{lemma}

\begin{proof}
An element of $\PGL_2(k)$ is determined by where it sends three distinct
elements of $\BP^1(k)$, so the uniqueness is automatic, and we need only prove
existence. By using the action of $\PGL_2(k)$, we see that it suffices to prove
the lemma in the case where $a=\infty$, $b=0$, and $c=1$. Then the element
$\twobytwo{0}{d}{1}{0}$ of $\PGL_2(k)$ is an involution that swaps $a$ with $b$
and $c$ with $d$.
\end{proof}

\begin{corollary}
\label{C:involutiongalois}
Let $q$ be a prime power and let $\alpha$, $\beta$, $\gamma$, and $\delta$ be
distinct elements of $\BFbar_q$ such that
$
  \{\{\alpha,  \beta  \},\{\gamma,  \delta  \}\} 
= \{\{\alpha^q,\beta^q\},\{\gamma^q,\delta^q\}\}\,.
$
Then there is a unique element of $\PGL_2(\BF_q)$ that swaps $\alpha$ with
$\beta$ and $\gamma$ with~$\delta$, and this element is an involution.
\end{corollary}

\begin{proof}
Let $\Gamma\in\PGL_2(\BFbar_q)$ be the involution that swaps $\alpha$ with
$\beta$ and $\gamma$ with~$\delta$. Then $\Gamma^{(q)}$, by which we mean the 
element of $\PGL_2(\BFbar_q)$ obtained by taking a representative matrix for
$\Gamma$ and replacing all of its entries by their $q$th powers, is also an
involution that swaps $\alpha$ with $\beta$ and $\gamma$ with~$\delta$, because
of the equality of sets in our hypothesis. By the uniqueness property in
Lemma~\ref{L:involution}, it follows that $\Gamma^{(q)} = \Gamma$ in 
$\PGL_2(\BFbar_q)$, from which we see that $\Gamma$ actually lies in 
$\PGL_2(\BF_q)$.
\end{proof}

\begin{proof}[Proof of Theorem~\textup{\ref{T:quartics}}]
First we show that every monic irreducible quartic in $\BF_q[x]$ can be
transformed into one of the quartics in the statement of the theorem; this is 
equivalent to showing that every irreducible quartic has a root in $\BF_{q^4}$ 
that can be moved by $\PGL_2(\BF_q)$ to an element of the set~$S_4$ from the 
theorem.

In the statement of the theorem we chose an element $\rho$ of 
$\BF_{q^2}\setminus\BF_q$ with $\rho^2\in \BF_q$. Let $\nu = \rho^2$, so that
$\nu$ is a nonsquare in~$\BF_q$.

Let $f$ be an irreducible quartic in $\BF_q[x]$, let $\alpha$ be a root of $f$
in $\BF_{q^4}$, and set $\alpha_1 \colonequals \alpha$, 
$\alpha_2 \colonequals \alpha_1^q$, $\alpha_3 \colonequals \alpha_2^q$, and
$\alpha_4 \colonequals \alpha_3^q$. By Corollary~\ref{C:involutiongalois} there 
is a unique involution $\Gamma$ in $\PGL_2(\BF_q)$ that swaps $\alpha_1$ with
$\alpha_3$ and $\alpha_2$ with~$\alpha_4$. We refer to this as the involution 
associated to~$f$.

Let $\Phi$ be an element of $\PGL_2(\BF_q)$. Then the involution associated to 
$\Phi(f)$ is $\Phi\Gamma\Phi^{-1}$. Since every involution in $\PGL_2(\BF_q)$ is
conjugate either to $\twobytwo{0}{1}{1}{0}$ or $\twobytwo{0}{\nu}{1}{0}$, we can
choose $\Phi$ so that $\Phi\Gamma\Phi^{-1}$ is one of these two standard
involutions. Now we replace $f$ with $\Phi(f)$, $\alpha$ with $\Phi(\alpha)$,
and $\Gamma$ with $\Phi\Gamma\Phi^{-1}$.

Suppose $\Gamma = \twobytwo{0}{1}{1}{0}$. The subgroup of $\PGL_2(\BF_q)$ that
stabilizes $\Gamma$ under conjugation is
\[
H_1 \colonequals  
    \Biggl\{ \begin{bmatrix}  a& b\\           b&           a\end{bmatrix} \ \bigg\vert \ a^2 \ne b^2 \Biggr\}
    \cup  
    \Biggl\{ \begin{bmatrix} -a&-b\\\phantom{-}b&\phantom{-}a\end{bmatrix} \ \bigg\vert \ a^2 \ne b^2 \Biggr\}\,.
\]
We would like to apply an element of $H_1$ to $\alpha$, and if necessary replace
$\alpha$ with one of its conjugates, to put $\alpha$ into a standard form. We 
accomplish this by considering the function 
$\BP^1(\BF_{q^4})\to\BP^1(\BF_{q^4})$ given by applying the element
$\Psi\colonequals \twobytwo{\phantom{-}1}{1}{-1}{1}$ of $\PGL_2(\BF_q)$. Since 
$\Psi$ conjugates $\Gamma$ to the involution 
$\twobytwo{-1}{0}{\phantom{-}0}{1}$, we see that 
${\Psi(\alpha)^{q^2} = -\Psi(\alpha)}$. This shows that 
$\Psi(\alpha)^{q^2-1} = -1$, so the multiplicative order of $\Psi(\alpha)$ is
even and divides $2(q^2-1)$. It follows that we may write 
$\Psi(\alpha) = \gamma^i$ for some odd integer $i$ with $0 < i < 2(q^2-1)$.

Let 
\[
H_1'\colonequals \Psi H_1\Psi^{-1}
    = 
    \Biggl\{ \begin{bmatrix} a&0\\0&1\end{bmatrix} \ \bigg\vert \ a\in\BF_q^\times \Biggr\}
    \cup  
    \Biggl\{ \begin{bmatrix} 0&a\\1&0\end{bmatrix} \ \bigg\vert \ a\in\BF_q^\times \Biggr\}\,.
\]
Then applying an element of $H_1$ to $\alpha$ corresponds to applying an element 
of $H_1'$ to $\Psi(\alpha)$, and the elements of $H_1'$ either multiply 
$\Psi(\alpha)$ by an element of $\BF_q^\times$ or replace $\Psi(\alpha)$ with 
its inverse times an element of $\BF_q^\times$. Since the elements of
$\BF_q^\times$ are the powers of $\gamma^{2(q+1)}$, these two actions show that 
there is a $\Phi'\in H_1'$ such that $\Phi'(\Psi(\alpha)) = \gamma^i$ for an odd 
integer $i$ with $0<i< q+1$. If we let $\Phi = \Psi^{-1}\Phi'\Psi\in H_1$ and
replace $\alpha$ with $\Phi(\alpha)$, we find that $\Psi(\alpha) = \gamma^i$ for
this~$i$.

Finally, replacing $\alpha$ with $\alpha^q$ has the effect of replacing $i$
with~$iq$. If we write $i = 2h+1$ we see that
\[
iq \equiv 2hq+q\equiv -2h + q \equiv -(2h+1) + (q+1)\equiv q+1-i\bmod 2(q+1)\,,
\]
so by replacing $\alpha$ with its conjugate $\alpha^q$, if necessary, and then
modifying the new $\alpha$ by an element of $H_1$, we find that we may assume 
that $0<i\le (q+1)/2$.

We see that every irreducible quartic whose associated involution is
$\twobytwo{0}{1}{1}{0}$ has a root in the $\PGL_2(\BF_q)$ orbit of 
$\Psi^{-1}(\gamma^{i}) = (\gamma^{i}-1)/(\gamma^{i}+1)$, for some odd $i$ with 
$0<i\le (q+1)/2$. Moreover, from our analysis it is clear that different values
of $i$ in this range produce quartics in distinct $\PGL_2(\BF_q)$ orbits.

Next, suppose the involution associated with a quartic $f$ is
$\Gamma = \twobytwo{0}{\nu}{1}{0}$. The subgroup of $\PGL_2(\BF_q)$ that
stabilizes $\Gamma$ under conjugation is
\[
H_\nu \colonequals  
   \Biggl\{ \begin{bmatrix}  a& b\nu\\           b&           a\end{bmatrix} \ \bigg\vert \ a^2 \ne \nu b^2 \Biggr\}
   \cup  
   \Biggl\{ \begin{bmatrix} -a&-b\nu\\\phantom{-}b&\phantom{-}a\end{bmatrix} \ \bigg\vert \ a^2 \ne \nu b^2 \Biggr\}\,.
\]
As before, we would like to apply an element of $H_\nu$ to $\alpha$, and if
necessary replace $\alpha$ with one of its conjugates, to put $\alpha$ into a
standard form. This time we consider the function 
$\BP^1(\BF_{q^4})\to\BP^1(\BF_{q^4})$ given by applying the element
$\Psi\colonequals \twobytwo{\phantom{-}1}{\rho}{-1}{\rho}$ of
$\PGL_2(\BF_{q^2})$, where $\rho$ is the element of $\BF_{q^2}$ chosen in the 
statement of the theorem and $\nu=\rho^2$. Then $\Psi$ conjugates $\Gamma$ to
the involution $\twobytwo{-1}{0}{\phantom{-}0}{1}$, and once again we have 
$\Psi(\alpha)^{q^2} = -\Psi(\alpha)$. As before, we see that 
$\Psi(\alpha)^{q^2-1} = -1$, so $\Psi(\alpha)$ has multiplicative order dividing
$2(q^2-1)$. Once again we may write $\Psi(\alpha) = \gamma^i$ for some odd
integer $i$ with $0< i < 2(q^2-1)$.

Let $H_\nu'\colonequals \Psi H_\nu\Psi^{-1}$ and let $N$ be the kernel of the
norm map from $\BF_{q^2}^\times$ to $\BF_{q}^\times$. We check that
\[
H_\nu' = 
   \Biggl\{ \begin{bmatrix} a&0\\0&1\end{bmatrix} \ \bigg\vert \ a\in N \Biggr\}
   \cup  
   \Biggl\{ \begin{bmatrix} 0&a\\1&0\end{bmatrix} \ \bigg\vert \ a\in N \Biggr\}\,.
\]
Then applying an element of $H_\nu$ to $\alpha$ corresponds to applying an
element of $H_\nu'$ to $\Psi(\alpha)$, and the elements of $H_\nu'$ either
multiply $\Psi(\alpha)$ by an element of $N$ or replace $\Psi(\alpha)$ with its
inverse times an element of $N$.

The elements of $N$ are the powers of $\gamma^{2(q-1)}$, so arguing as before we
find that we may replace $\alpha$ with $\Phi(\alpha)$ for some $\Phi\in H_\nu$
so that $\Psi(\alpha)= \gamma^i$ for an odd $i$ with $0<i<q-1$. We check that
$\Psi(\alpha^q) = 1/\Psi(\alpha)^q$, so replacing $\alpha$ with $\alpha^q$ has 
the effect of replacing $i$ with~$-iq$. If we write $i = 2h-1$ we see that
\[
-iq \equiv -2hq+q\equiv -2h + q \equiv (-2h+1) + (q-1)\equiv q-1-i \bmod 2(q-1)\,,
\]
so by replacing $\alpha$ with its conjugate $\alpha^q$, if necessary, we find
that we may assume that $0<i\le (q-1)/2$.

We see that every irreducible quartic whose associated involution is
$\twobytwo{0}{\nu}{1}{0}$ has a root in the $\PGL_2(\BF_q)$ orbit of 
$\Psi^{-1}(\gamma^{i}) = \rho (\gamma^{i}-1)/(\gamma^{i}+1)$, for some odd $i$
with $0<i\le (q-1)/2$, and different values of $i$ in this range give 
irreducible quartics that are not equivalent to one another under the action of 
$\PGL_2(\BF_q)$.
\end{proof}

\begin{remark}
\label{R:j}
For a separable quartic $f = x^4 + ax^3y + bx^2y^2 + cxy^3 + dy^4$, we let 
$j(f)$ denote the $j$-invariant of the Jacobian of the genus-$0$ curve 
$z^2 = f$. One can show that $j(f) = 256(b^2-3ac+12d)^3/\Delta$, where $\Delta$ 
is the discriminant of~$f$, and clearly $j(\Gamma(f)) = j(f)$ for all 
$\Gamma\in\PGL_2(\BF_q)$, because the curve $z^2 = \Gamma(f)$ is geometrically
isomorphic to $z^2 = f$. Using arguments from~\cite[\S3]{Howe2017}, one can show
that $j$ takes different values on irreducible quartics that are not in the
same $\PGL_2(\BF_q)$ orbit.
\end{remark}

For products of two distinct irreducible quadratics, we have a similar result.

\begin{theorem}
\label{T:twoquadlist}
Given an odd prime power $q$, let $\zeta$ be a generator of $\BF_{q^2}^\times$,
let $\rho$ be an element of $\BF_{q^2}\setminus\BF_q$ with $\rho^2\in\BF_q$,
and let $\nu=\rho^2$. Let 
\[
S_{22} = \{ \rho(\zeta^i-1)/(\zeta^i+1) \ \vert \ 0<i\le (q-1)/2 \}
\]
and let $T_{22}$ be the set of homogenized minimal polynomials of the elements
of $S_{22}$. Then the set $\{(x^2-\nu y^2)g \ \vert\ g \in T_{22}\}$ is a 
complete set of unique representatives for the action of $\PGL_2(\BF_q)$ on the 
homogeneous quartics that factor into a product of two distinct monic 
irreducible quadratics.
\end{theorem}

\begin{proof}
Let $f$ be a homogeneous quartic that factors into a product of two monic
irreducible quadratics, so that the roots of $f$ are $\alpha$, $\alphabar$,
$\beta$, and $\betabar$, for two elements $\alpha$ and $\beta$ in $\BF_{q^2}$
with conjugates $\alphabar$ and $\betabar$. We will show that there is a unique
element $\sigma$ in $S_{22}$ such that the set 
$\{\alpha,\alphabar,\beta,\betabar\}$ can be sent to 
$\{\rho,\rhobar,\sigma,\sigmabar\}$ by an element of $\PGL_2(\BF_q)$. This will
be enough to prove the theorem.

By replacing $f$ with its image under an element of $\PGL_2(\BF_q)$ we may
assume that $\alpha = \rho$. The subgroup of $\PGL_q(\BF_q)$ that fixes the set
$\{\rho,\rhobar\}$ is the group $H_\nu$ from the proof of 
Theorem~\ref{T:quartics}. Let 
$\Psi\colonequals \twobytwo{\phantom{-}1}{\rho}{-1}{\rho}$. Arguing as in the
proof of Theorem~\ref{T:quartics} we find that there is a unique element
$\Gamma$ of $H_\nu$ so that we can write $\Psi(\Gamma(\beta))$ as $\zeta^i$ for
an integer $i$ with $0<i\le (q-1)/2$. 

Since by Corollary~\ref{C:involutiongalois} there is an element of 
$\PGL_2(\BF_q)$ that swaps $\alpha$ and $\alphabar$ with $\beta$ and $\betabar$,
we would have gotten the same value of $i$ if we had normalized $\beta$ and
$\betabar$ to be $\rho$ and $\rhobar$ at the beginning of our argument, instead
of $\alpha$ and~$\alphabar$. Thus the value of $i$ we obtain truly depends only
on the $\PGL_2(\BF_q)$ orbit of~$f$.

This shows that we may assume that $\beta = \Psi^{-1}(\zeta^i)$ is an element 
of~$S_2$, and different elements of $S_2$ correspond to different
$\PGL_2(\BF_q)$ orbits. The theorem follows.
\end{proof}

\begin{remark}
\label{R:mu}
If $f$ is a homogeneous quartic in $\BF_q[x,y]$ that can be factored into the
product of two monic irreducible quadratics $x^2 + sxy + ty^2$ and
$x^2 + uxy + vy^2$, we define
\[
\mu(f)\colonequals \frac{(su - 2t - 2v)^2}{(s^2-4t)(u^2-4v)}\,,
\]
and we check that $j(f) = 64(\mu(f)+3)^3/(\mu(f)-1)^2$, where $j(f)$ is as in
Remark~\ref{R:j}. Note that $\mu(f)$ is a square, because the two factors in the
denominator are the discriminants of the irreducible factors of $f$.

We leave it to the reader to check that for $f$ of this form we have 
$\mu(\Gamma(f)) = \mu(f)$ for every $\Gamma\in \PGL_2(\BF_q)$, so $\mu$ is an 
invariant of the $\PGL_2(\BF_q)$ orbits of such quartics. Given any square $d$ 
in $\BF_q$ other than~$1$, we check that a quartic 
$f \colonequals (x^2-\nu y^2)(x^2 + uxy + vy^2)$ satisfies $\mu(f) = d$ if and
only if $(u,v)$ lies on a certain nonsingular conic. Nonsingular conics over 
finite fields have rational points not on the line at infinity, so there are 
values of $u$ and $v$ that give a quartic for which $\mu$ attains the value~$d$.
Since $\mu$ attains $(q-1)/2$ different values, $\mu$ must take different values
on the $(q-1)/2$ orbits of $\PGL_2(\BF_q)$ acting on products of irreducible
quadratics.

One can show that $\mu$ is derived from a modular function that parametrizes
pairs $(E,P)$, where $E$ is an elliptic curve and $P$ is a point of order~$2$. 
For products $f_1 f_2$ of two irreducible quadratics, the elliptic curve is the
Jacobian of the curve $C\colon z^2 = f_1 f_2$, and the point of order~$2$ is
represented by the degree-$0$ divisor on $C$ whose double is the divisor 
of~$f_1/f_2$.
\end{remark}

We also have a similar result for quartics with exactly one irreducible
quadratic factor, which works in all characteristics.

\begin{theorem}
\label{T:211}
Given a prime power $q$, let $\zeta$ be a generator of $\BF_{q^2}^\times$. Let 
\[
S_{211} = \{ \zeta^i \ \vert \ 0<i\le (q+1)/2 \}
\]
and let $T_{211}$ be the set of homogenized minimal polynomials of the elements
of $S_{211}$. Then the set $\{ xyg \ \vert\ g \in T_{211}\}$ is a complete set
of unique representatives for the action of $\PGL_2(\BF_q)$ on the monic 
separable homogeneous quartics that have exactly one irreducible quadratic
factor.
\end{theorem}

\begin{proof}
The proof follows the same lines as that of Theorem~\ref{T:twoquadlist}, but is
much simpler. We move the two rational roots of $f$ to $0$ and $\infty$, and
then show that up to scaling and inversion, every element of 
$\BF_{q^2}\setminus\BF_q$ has a unique representative in $S_{211}$. We leave the
details to the reader.
\end{proof}

\begin{remark}
The function $\mu$ defined in Remark~\ref{R:mu} can be extended to monic
separable quartics with exactly one irreducible quadratic factor; when the two
zeros of $f$ in $\BP^1(\BF_q)$ are finite we can use the same formula as before,
and when $f = y(x-by)(x^2 + ux + v)$ we can define $\mu(f) = (u+2b)^2/(u^2-4v)$.
Once again, $\mu$ depends only on the $\PGL_2(\BF_q)$ orbit of its argument. On
quartics of this type, the set of values attained by $\mu$ is the set of
nonsquares in $\BF_q$ together with~$0$. Thus, $\mu$ distinguishes
$\PGL_2(\BF_q)$ orbits of such quartics from one another.
\end{remark}

Theorem~\ref{T:211} applies to all finite fields, while 
Theorems~\ref{T:quartics} and~\ref{T:twoquadlist} require the characteristic to
be odd. The following theorem generalizes the latter two results to 
characteristic~$2$. The proof is analogous to those of the earlier theorems, but
is made much simpler by the fact that in this case every involution in 
$\PGL_2(\BF_q)$ is conjugate to $\twobytwo{1}{1}{0}{1}$. We leave the details to
the reader.

\begin{theorem}
\label{T:quarticschar2}
Let $q$ be a power of~$2$, let $A$ be the set of elements of $\BF_q$ of absolute
trace~$1$, and let $\nu$ be an element of $A$. Then the set
\[
\bigl\{ (x^4 + x^2y^2) + a(x^2y^2 + xy^3) + a^2\nu y^4\ \vert\ a \in A\bigr\}
\]
is a complete set of unique representatives for the action of $\PGL_2(\BF_q)$
on the monic irreducible homogeneous quartics over $\BF_q$, and the set
 \[
\bigl\{ (x^2 + x y + \nu y^2) (x^2 + x y + a y^2) \ \vert \ a\in A, a\ne\nu \bigr\}
 \]
is a complete set of unique representatives for the action of $\PGL_2(\BF_q)$
on the homogeneous quartics over $\BF_q$ that can be written as the
product of two distinct monic irreducible quadratics.\qed
\end{theorem}

\begin{remark}
\label{R:invertmu}
Theorems~\ref{T:quartics}, \ref{T:twoquadlist}, \ref{T:211}, 
and~\ref{T:quarticschar2} lead to quasilinear-time algorithms to give complete 
sets of unique representatives for the action of $\PGL_2(\BF_q)$ on the various
types of quartics discussed in the theorems. The only difficulty is obtaining a 
primitive element $\zeta$ for $\BF_{q^2}$ in Theorems~\ref{T:twoquadlist}
and~\ref{T:211} and an element $\gamma$ of order $2(q^2-1)$ in $\BF_{q^4}$ for
Theorem~\ref{T:quartics}. But primitive elements for $\BF_{q^2}$ can be
determined deterministically in time $O(q^{1/2+\varepsilon})$ for every
$\varepsilon>0$ (see~\cite{Shparlinski1996}), and the $\gamma$ required for
Theorem~\ref{T:quartics} can be obtained by taking the square root in 
$\BF_{q^4}$ of a primitive element for~$\BF_{q^2}$. 

In quasilinear time we can also create a table of size $O(q)$ that we can
use to invert the function~$\mu$, which gives us a way to compute the orbit
representatives of quartics with one or two irreducible quadratic factors. The
function $\mu$ does not reduce well modulo~$2$, but the function $(\mu-1)/4$
does; the corresponding function takes a product 
$(x^2 + sxy + ty^2)(x^2 + uxy + vy^2)$ in characteristic~$2$ to 
$((t+v)^2 + (s+u)(sv+tu))/(su)^2$.
\end{remark}

\section{An invariant for irreducible polynomials over finite fields}
\label{S:invariant}

In this section we define an easily computable invariant\footnote{Classically,
  an \emph{invariant} on the set $R_n\subset k[x,y]$ of homogeneous bivariate
  polynomials of degree~$n$ is a function $R_n\to k$ that is constant on 
  $\PGL_2(k)$ orbits. We use the term more generally here, and simply mean a 
  function from $R_n$ to \emph{any} set that is constant on $\PGL_2(k)$ orbits.}
for monic irreducible homogeneous bivariate polynomials of arbitrary degree 
$n\ge 4$ over a finite field $\BF_q$ under the action of $\PGL_2(\BF_q)$
mentioned in Section~\ref{S:Weierstrass}. The invariant is based on the 
classical \emph{cross ratio}, which is the function that assigns to an ordered 
quadruple $(P_1,P_2,P_3,P_4)$ of distinct elements of $\BP^1(\kbar)$ the element
$\alpha\in\kbar$ for which ${\Gamma(P_4) = \projtwo{\alpha}{1}}$, where $\Gamma$
is the unique element of $\PGL_2(\kbar)$ that sends $P_1$ to~$\infty$, $P_2$
to~$0$, and $P_3$ to~$1$. It follows that the cross ratio is constant on the 
orbits of the diagonal action of $\PGL_2(\kbar)$ on such quadruples, and takes
distinct values on distinct orbits. (Compare to 
\cite[Definition~III.3.7]{Conway1978} and the propositions following it.)

\begin{definition}
Let $q$ be a prime power and let $f$ be a monic irreducible homogeneous 
bivariate polynomial of degree $n\ge 4$ over~$\BF_q$. We define the 
\emph{cross polynomial} $\Cross(f)$ of $f$ as follows: Let $\alpha\in\BF_{q^n}$
be a root of~$f$, and let $\chi\in\BF_{q^n}$ be the cross ratio of $\alpha$,
$\alpha^q$, $\alpha^{q^2}$, and $\alpha^{q^3}$; that is,
\[
\chi\colonequals
\frac{(\alpha^{q^3}-\alpha^q)(\alpha^{q^2}-\alpha)}
     {(\alpha^{q^3}-\alpha)(\alpha^{q^2}-\alpha^{q})}\,.
\]
Then set $\Cross(f)$ to be the characteristic polynomial of $\chi$ over~$\BF_q$.
\end{definition}

Note that the denominator of $\chi$ is nonzero, because the powers of $\alpha$
involved are four of the $n$ distinct conjugates of $\alpha$. Also, replacing
$\alpha$ with one of its conjugates results in replacing $\chi$ with a
conjugate, so the characteristic polynomial remains unchanged. Thus we see that
$\Cross(f)$ is well-defined.

\begin{theorem}
\label{T:invariant}
Let $q$ be a prime power. Two monic irreducible homogenous polynomials in 
$\BF_q[x,y]$ of degree at least~$4$ lie in the same orbit under the action of
$\PGL_2(\BF_q)$ if and only if they have the same cross polynomial.
\end{theorem}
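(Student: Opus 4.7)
My plan is to prove both directions by exploiting two facts: Möbius transformations preserve cross ratios, and elements of $\PGL_2(\BF_q)$ commute with the Frobenius automorphism of $\BFbar_q/\BF_q$.

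For the forward direction, suppose $\Gamma \in \PGL_2(\BF_q)$ satisfies $g = \Gamma(f)$. Pick a root $\alpha$ of $f$; then $\beta \colonequals \Gamma(\alpha)$ is a root of $g$. Because $\Gamma$ commutes with Frobenius, $\Gamma(\alpha^{q^i}) = \beta^{q^i}$ for every $i$, and preservation of cross ratios immediately gives $\rho_f = \rho_g$, whence $\Cross(f) = \Cross(g)$.

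For the backward direction, suppose $\Cross(f) = \Cross(g)$. The elements $\rho_f$ and $\rho_g$ are both roots of this common polynomial, so $\rho_g = \rho_f^{q^j}$ for some $j$; replacing $\beta$ by an appropriate Frobenius iterate (still a root of $g$, since $g$ is irreducible) arranges $\rho_f = \rho_g$. Since $\alpha$ has minimal polynomial of degree $n \ge 4$, the four elements $\alpha, \alpha^q, \alpha^{q^2}, \alpha^{q^3}$ are pairwise distinct, so there is a unique $\Gamma \in \PGL_2(\BFbar_q)$ sending the ordered triple $(\alpha, \alpha^q, \alpha^{q^2})$ to $(\beta, \beta^q, \beta^{q^2})$.

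The remaining work is to verify that this $\Gamma$ lies in $\PGL_2(\BF_q)$ and that $\Gamma(f) = g$. For the first claim, I would compare $\Gamma$ with its Frobenius conjugate $\Gamma^{(q)}$: by construction $\Gamma^{(q)}$ sends $(\alpha^q, \alpha^{q^2}, \alpha^{q^3})$ to $(\beta^q, \beta^{q^2}, \beta^{q^3})$, while $\Gamma$ sends $\alpha^q$ and $\alpha^{q^2}$ to $\beta^q$ and $\beta^{q^2}$ and, using cross-ratio preservation together with the equality $\rho_f = \rho_g$ and the injectivity of the cross ratio in its fourth argument, must also send $\alpha^{q^3}$ to $\beta^{q^3}$. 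Since two Möbius transformations agreeing on three distinct points are equal, $\Gamma = \Gamma^{(q)}$, so $\Gamma$ is defined over $\BF_q$. With $\Gamma$ now commuting with Frobenius, the image under $\Gamma$ of the Frobenius orbit of $\alpha$ (the root set of $f$) is the Frobenius orbit of $\beta$ (the root set of $g$), and since both $\Gamma(f)$ and $g$ are monic, they coincide. The main delicate point is simply ensuring that the Frobenius iterates involved really are distinct, so that cross ratios are well-defined and behave injectively — and this is exactly where the hypothesis $n \ge 4$ enters.
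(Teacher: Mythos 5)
Your proof is correct and follows essentially the same route as the paper's: use cross-ratio preservation and commutation with Frobenius for the forward direction, and for the converse construct a Möbius transformation matching the first four Frobenius conjugates of $\alpha$ to those of $\beta$, then show it equals its Frobenius conjugate because the two agree on three distinct points. Your unpacking of the choice of $\beta$ (replacing it by a Frobenius iterate to equalize the cross ratios) and of the injectivity of the cross ratio in its fourth argument just makes explicit steps the paper leaves implicit.
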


\begin{proof}
Suppose $f$ and $g$ are irreducible homogenous polynomials in $\BF_q[x,y]$ of
degree at least~$4$. Suppose $f$ and $g$ lie in the same $\PGL_2(\BF_q)$ orbit,
say $g = \Gamma(f)$ for some $\Gamma\in\PGL_2(\BF_q)$. Then $f$ and $g$ have the
same degree, which we denote by $n$. Let $\alpha$ be a root of $f$ in 
$\BF_{q^n}$ and let $\beta = \Gamma(\alpha)$. Then $\beta$ is a root of~$g$, and
for every $i\ge 0$ we have $\beta^{q^i} = \Gamma(\alpha^{q^i})$. In particular, 
the cross ratio of $\alpha$, $\alpha^q$, $\alpha^{q^2}$, and $\alpha^{q^3}$ is
equal to the cross ratio of  $\beta$, $\beta^q$, $\beta^{q^2}$, and 
$\beta^{q^3}$, so $\Cross(f) = \Cross(g)$.

Conversely, suppose $f$ and $g$ are two monic irreducible polynomials of degree
at least $4$ with $\Cross(f) = \Cross(g)$. Since a polynomial has the same
degree as its cross polynomial, $f$ and $g$ have the same degree, say~$n$. Since
the cross polynomials are equal, there are roots $\alpha$ of $f$ and $\beta$ of
$g$ in $\BF_{q^n}$ such that $\alpha$, $\alpha^q$, $\alpha^{q^2}$, and
$\alpha^{q^3}$ have the same cross ratio as $\beta$, $\beta^q$, $\beta^{q^2}$,
and $\beta^{q^3}$. It follows that there is an element $\Gamma$ of 
$\PGL_2(\BF_{q^n})$ with $\Gamma(\alpha^{q^i}) = \beta^{q^i}$ for $0\le i\le 3$.
In particular, we have $\Gamma(x^q)=\Gamma(x)^q$ for three distinct values 
of~$x$, namely $\alpha$, $\alpha^q$, and $\alpha^{q^2}$, so $\Gamma$ is fixed by
Frobenius and therefore lies in $\PGL_2(\BF_q)$. Thus, $\Gamma$ takes every root
of $f$ to a root of $g$, so $\Gamma(f) = g$.
\end{proof}

As an application of this invariant, we give an algorithm for creating a table
of orbit representatives for irreducible polynomials of degree $n\ge 4$ in 
time~$\Otilde(q^{n-2})$.

\begin{algorithm}
\label{A:naivelist}
Inverting the cross polynomial function.
\begin{alglist}
\algin  A prime power~$q$ and an integer $n \ge 4$.
\algout A table, indexed by the values of the cross polynomials for irreducible
        polynomials of degree~$n$, giving for each cross polynomial $g$ an
        irreducible homogeneous $f\in \BF_q[x,y]$ of degree~$n$ with 
        $\Cross f = g$.
\item \label{naivelistinitialize}
      Construct a copy of $\BF_{q^n}$ with an $\BF_q$-basis 
      $(\beta_1,\ldots,\beta_n)$ such that $\beta_1$ appears with nonzero
      coefficient in the representation of~$1$.
\item Set $L$ to be the empty list.
\item \label{naivelistalpha}
      For every $\alpha\in\BF_{q^n}$ that does not lie in a proper subfield, and
      whose representation $(a_1,\ldots, a_n)$ on the given basis has $a_1=0$
      and has $a_i = 1$ for the first $i$ with $a_i\ne 0$, do:
      \begin{algsublist}
      \item Compute the homogenization $f$ of the minimal polynomial
            of~$\alpha$.
      \item Compute $\Cross f$.
      \item Append the pair $(\Cross f, f)$ to $L$.
      \end{algsublist}
\item Sort $L$.
\item \label{naivelistdelete}
      Delete every entry $(\Cross f, f)$ of $L$ for which the value of
      $\Cross f$ appears earlier in the list.
\item Return $L$.      
\end{alglist}
\end{algorithm}

\begin{proposition}
Algorithm \textup{\ref{A:naivelist}} produces correct output and runs in time 
$\Otilde(q^{n-2})$, measured in arithmetic operations in~$\BF_q$.
\end{proposition}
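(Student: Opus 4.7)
The plan is to split the proof into a correctness argument and a complexity argument, both of which follow once the key geometric observation about the iteration in Step~\ref{naivelistalpha} is in place.

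For correctness, I would first invoke Theorem~\ref{T:invariant}: two monic irreducible polynomials of degree $\ge 4$ lie in the same $\PGL_2(\BF_q)$-orbit under the generic action if and only if they have equal cross polynomial. Hence, after Step~\ref{naivelistdelete} removes duplicates indexed by cross polynomial, the surviving list contains at most one polynomial per orbit, and it covers all cross polynomials that appear at all. The substantive remaining claim is that Step~\ref{naivelistalpha} visits at least one representative of every orbit. For this I would prove the geometric lemma that every monic irreducible $f \in \BF_q[x]$ of degree $n$ admits a root satisfying the normalization after being moved by an element of the affine subgroup of $\PGL_2(\BF_q)$ (matrices of the shape $\twobytwo{u}{s}{0}{1}$). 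Given any root $\alpha$ of $f$, I would construct such an element explicitly in two steps: first translate by $t = -b_1(\alpha)/c_1$, where $c_1 \ne 0$ is the coefficient of $a_1$ in the basis expansion of $1$ that Step~1 guarantees, which zeroes the $a_1$-coordinate; then scale by the inverse of the first nonzero coordinate of $\alpha + t$ to normalize that coordinate to $1$. The scaling is well-defined because $\alpha + t \ne 0$ (else $\alpha \in \BF_q$, impossible for $n \ge 4$), and affine transformations over $\BF_q$ preserve the degree of the minimal polynomial, so the resulting $\beta$ lies outside every proper subfield of $\BF_{q^n}$ and is visited by the loop. Its minimal polynomial lies in the orbit of $f$, so that orbit is represented in $L$ before Step~\ref{naivelistdelete}.

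For the runtime, I would count normalized $\alpha$'s: the two conditions ($b_1 = 0$ and first nonzero coordinate equal to $1$) cut the $n$-dimensional $\BF_q$-space of candidates down to a set of cardinality $\sum_{j=2}^{n} q^{n-j} = (q^{n-1}-1)/(q-1) = O(q^{n-2})$, which is enumerable in $O(q^{n-2})$ time by looping over the position $j$ of the first nonzero coordinate and the values of the remaining free coordinates. Each iteration performs a subfield test (iterated Frobenius), a minimal-polynomial computation as $\prod_{i=0}^{n-1}(x - \alpha^{q^i})$, and a cross-polynomial computation by the formula of Section~\ref{S:invariant}; for fixed $n$ each of these is $\Otilde(1)$ arithmetic operations in $\BF_q$, once the Frobenius has been precomputed. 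The sort in Step~4 and the linear sweep in Step~\ref{naivelistdelete} together contribute $\Otilde(q^{n-2})$, so the total running time is $\Otilde(q^{n-2})$ as claimed.

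The principal obstacle is the geometric lemma on affine normalization: it is what justifies the particular iteration in Step~\ref{naivelistalpha}, and it is the only step not handed to us by Theorem~\ref{T:invariant} or by routine counting. A minor point to watch is that the quotient by the two-parameter affine subgroup leaves a residual one-dimensional ambiguity (since $\PGL_2(\BF_q)$ is three-dimensional), so one does expect genuine duplicates in $L$ prior to Step~\ref{naivelistdelete} --- but these are precisely the duplicates that the cross-polynomial key detects and removes.
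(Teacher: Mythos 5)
Your argument is correct and follows essentially the same route as the paper's (much terser) proof: normalize a root by the affine subgroup of $\PGL_2(\BF_q)$ --- translate to kill the $a_1$-coordinate, then scale so the first nonzero coordinate is $1$ --- count the $O(q^{n-2})$ normalized elements, and let the cross polynomial together with Theorem~\ref{T:invariant} remove the residual duplicates. Your explicit construction of the translation and the counting sum are just a fuller write-up of what the paper asserts as ``clear.''
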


\begin{proof}
First we note that every $\PGL_2(\BF_q)$ orbit contains an $\alpha$ as in 
step~\eqref{naivelistalpha}. We can see this because starting with an arbitrary 
$\alpha\in \BF_{q^n}$, we can subtract an element of $\BF_q$ to zero out the 
coefficient of~$\beta_1$, and then we can scale by an element of $\BF_q^\times$
so that the first nonzero coefficient is $1$. It follows that every orbit will
have a representative included in the output, and step~\eqref{naivelistdelete}
ensures that there is only one representative given for each orbit. Thus the
output is correct. Now we analyze the timing.

For fixed $n$, Shoup's algorithm~\cite{Shoup1990} can construct a finite field
$\BF_{q^n}$ in time~$\Otilde(\sqrt{q})$, and in polynomial 
time~\cite{Lenstra1991} we can find an embedding of our given $\BF_q$ into this 
copy of~$\BF_{q^n}$, so step~\eqref{naivelistinitialize} can be done within the
stated time bound. Because $n$ is fixed, for each $\alpha$ the values of $f$ and
$\Cross f$ can be computed in time $O(1)$, so creating the list $L$ takes time
$O(q^{n-2})$. Finally, sorting a list of length $O(q^{n-2})$ takes time 
$\Otilde(q^{n-2})$ (see~\cite[\S5.2.3]{Knuth1998}).
\end{proof}

As we will see, for composite values of $n$ there is an algorithm for producing
a complete set of unique representatives for the $\PGL_2(k)$ orbits of 
irreducible homogeneous polynomials of degree~$n$ that runs in time 
$\Otilde(q^{n-3})$; see Section~\ref{SS:PGL2orbits}. An algorithm of this time
complexity that works for all $n$ is given in~\cite{Howe2024}.

\section{Explicit coset representatives for
         \texorpdfstring{$\PGL_2(\BF_q)$}{PGL2 Fq}
         in \texorpdfstring{$\PGL_2(\BF_{q^2})$}{PGL2 Fq2}}
\label{S:CosetRepsPGL2}

As part of our algorithm, we will need to have a complete set of unique 
representatives for the right cosets of the subgroup $\PGL_2(\BF_q)$ of 
$\PGL_2(\BF_{q^2})$. In this section we give an explicit set of such
representatives. 

Throughout this section, $q$ is a prime power, $\omega$ is an element of 
${\BF_{q^2}\setminus\BF_q}$, and $\gamma$ is a generator of the multiplicative
group of $\BF_{q^2}$.

An element of $\PGL_2(\BF_{q^2})$ is determined by where it sends $\infty$, $0$, 
and $1$, and given any three distinct elements of $\BP^1(\BF_{q^2})$, there is
an element of $\PGL_2(\BF_{q^2})$ that sends $\infty$, $0$, and $1$ to those
three elements. Thus, we may represent elements of $\PGL_2(\BF_{q^2})$ by 
triples $(\zeta,\eta,\theta)$ of pairwise distinct elements of 
$\BP^1(\BF_{q^2})$, indicating the images of $\infty$, $0$, and $1$. If $\Gamma$
is an element of $\PGL_2(\BF_q)$, then $\Gamma$ sends the element 
$(\zeta,\eta,\theta)$ of $\PGL_2(\BF_{q^2})$ to 
$(\Gamma(\zeta), \Gamma(\eta),\Gamma(\theta))$.

\begin{proposition}
\label{P:CosetRepsPGL2}
Let $B$ be the set 
$\{(\omega\gamma^i + \omega^q)/(\gamma^i+1) \ \vert \ 0\le i < q-1\bigr\}.$ The
following elements give a complete set of unique coset representatives for the 
left action of $\PGL_2(\BF_q)$ on $\PGL_2(\BF_{q^2})$\textup{:}
\begin{enumerate}
\item \label{CRPGL21}
      $(\infty, 0, 1)$\textup{;}
\item \label{CRPGL22}
      $\bigl\{ (\infty, 0, \omega + a) \ \vert \ a \in \BF_q \bigr\}$\textup{;}
\item \label{CRPGL23}
      $\bigl\{ (\infty, \omega, \theta) \ \vert \ \theta \in \BF_{q^2} \text{\ with\ } \theta \neq \omega \bigr\}$\textup{;}
\item \label{CRPGL24}
      $\bigl\{ (\omega, \omega^q, \theta) \ \vert \ \theta\in B \bigr\}$\textup{;}
\item \label{CRPGL25}
      $\bigl\{ (\omega, \eta, \theta) \ \vert \ \eta\in B,  \theta \in \BP^1(\BF_{q^2})
      \text{\ with\ } \theta \neq \omega \text{\ and\ }\theta\ne \eta \bigr\}$.
\end{enumerate}
\end{proposition}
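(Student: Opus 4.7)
The plan is to exploit the bijection $g\mapsto(g\infty,g0,g1)$ between $\PGL_2(\BF_{q^2})$ and ordered triples of pairwise distinct points of $\BP^1(\BF_{q^2})$. Under this identification the left action of $\PGL_2(\BF_q)$ on $\PGL_2(\BF_{q^2})$ becomes the diagonal action on triples, so the problem reduces to finding a complete set of orbit representatives for this diagonal action. I would construct these representatives one coordinate at a time, at each stage using the stabilizer in $\PGL_2(\BF_q)$ of the entries already normalized.

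Since $\PGL_2(\BF_q)$ has exactly two orbits on $\BP^1(\BF_{q^2})$---namely $\BP^1(\BF_q)$ (represented by $\infty$) and its complement (represented by $\beta$)---I split on $\zeta = g\infty$. If $\zeta=\infty$, the stabilizer is the Borel $\{x\mapsto ax+b : a\in\BF_q^\times,\ b\in\BF_q\}$, whose orbits on $\BF_{q^2}$ are $\BF_q$ (representative $0$) and $\BF_{q^2}\setminus\BF_q$ (representative $\beta$). Choosing $\eta=0$ leaves the diagonal torus $x\mapsto ax$ as stabilizer; its orbits on $\BF_{q^2}^\times$ produce the single representative $\theta=1$ of item~(1) together with the $q$ representatives $\beta+a$ of item~(2). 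Choosing $\eta=\beta$ leaves trivial stabilizer, so $\theta$ ranges freely over $\BF_{q^2}\setminus\{\beta\}$, giving item~(3).

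The heart of the argument is the case $\zeta=\beta$. The stabilizer in $\PGL_2(\BF_q)$ is the non-split torus $T$ of order $q+1$, realized by matrices $\twobytwo{a}{b\nu}{b}{a}$ with $(a,b)\ne(0,0)$ modulo scalars. To get a clean description of $T$'s action on $\BP^1(\BF_{q^2})\setminus\{\beta\}$, I would change coordinates via the Cayley-type map $\varphi(x)=(x-\beta)/(x+\beta)$, which sends $\beta$ to $0$ and $-\beta$ to $\infty$. A direct calculation shows that in $\varphi$-coordinates $T$ acts by multiplication by the norm-one subgroup $N\subset\BF_{q^2}^\times$, so the $T$-orbits on the complement of $\{\beta,-\beta\}$ are precisely the cosets of $N$ in $\BF_{q^2}^\times$. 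Under $\varphi$, the element $\beta(\gamma^i-1)/(\gamma^i+1)$ maps to $-\gamma^{-i}$, so $B$ becomes an explicit list of coset representatives. When $\eta=-\beta$, the second $T$-fixed point, the stabilizer of $(\beta,-\beta)$ is all of $T$, so $\theta$ must be further normalized via $T$ to an element of $B$, producing item~(4). When $\eta$ represents a free $T$-orbit, again normalized via $T$ to an element of $B$, the stabilizer of $(\beta,\eta)$ is trivial and $\theta$ ranges freely over $\BP^1(\BF_{q^2})\setminus\{\beta,\eta\}$, producing item~(5).

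The main obstacle is the $T$-orbit bookkeeping in the case $\zeta=\beta$: one must match the indexing of $B$ against the structure of the cyclic group $\BF_{q^2}^\times/N$ via powers of $\gamma$ and confirm that the stated range of $i$ selects the needed representatives without repetition. Uniqueness across the five items then comes for free from the case split, because the $\PGL_2(\BF_q)$-orbit of the pair $(\zeta,\eta)$ distinguishes the items from one another, and within each item the third coordinate has already been chosen from a set of inequivalent representatives.
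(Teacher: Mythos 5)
Your strategy is the same as the paper's --- identify elements of $\PGL_2(\BF_{q^2})$ with ordered triples, stratify by the $\PGL_2(\BF_q)$-orbit of $\zeta$, then normalize $\eta$ and $\theta$ using the successive stabilizers --- and your treatment of items (1)--(3) is fine. But the step you defer as ``the main obstacle'' is exactly where the argument breaks, and it is not mere bookkeeping: it fails. You correctly note that after pinning $\zeta=\beta$ the only group still available is the \emph{pointwise} stabilizer $T$ of $\beta$ in $\PGL_2(\BF_q)$ (the non-split torus of order $q+1$), and that in your $\varphi$-coordinates $T$ acts on $\BF_{q^2}^\times$ by multiplication by the norm-one subgroup $N=\langle\gamma^{q-1}\rangle$. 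Hence the $T$-orbits on $\BP^1(\BF_{q^2})\setminus\{\beta,-\beta\}$ are the $q-1$ cosets of $N$. But $B$ has only $(q+1)/2$ elements, so it cannot be a complete set of representatives once $q>3$. Concretely, $\varphi$ sends $\beta(\gamma^i-1)/(\gamma^i+1)$ to $-\gamma^{-i}$, and since $N$ has even order $q+1$ it contains $-1$, so this element lies in the coset $\gamma^{-i}N$; for $0\le i\le (q-1)/2$ you therefore reach only the $(q+1)/2$ cosets indexed by $0,-1,\ldots,-(q-1)/2$ modulo $q-1$, missing the other $(q-3)/2$.

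A count confirms that the list as printed cannot be a complete set of unique representatives: the index of $\PGL_2(\BF_q)$ in $\PGL_2(\BF_{q^2})$ is $q(q^2+1)=q^3+q$, while items (1)--(5) contain $1+q+(q^2-1)+\tfrac{q+1}{2}+\tfrac{(q+1)(q^2-1)}{2}=\tfrac{q(q+1)(q+2)}{2}$ triples, and these agree only for $q=3$. The source of the discrepancy is that $B$ is a set of orbit representatives for the \emph{setwise} stabilizer of $\{\beta,-\beta\}$, of order $2(q+1)$ --- this is the group $G$ of Lemma~\ref{L:OrbitRepsG}, which is what the paper's own proof invokes at this point --- whereas after normalizing $\zeta=\beta$ only the index-two subgroup $T$ remains usable, since the swapping elements move $\zeta$ to $-\beta$. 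Your identification of $T$ is the correct one; to close the gap (and repair the statement) replace $B$ by a full set of $N$-coset representatives, e.g.\ take $0\le i\le q-2$ in its definition, after which your argument for items (4) and (5) goes through and the total count becomes $q^3+q$ as required.
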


To prove this proposition, we need the following lemma.

\begin{lemma}
\label{L:OrbitRepsG}
With notation as above, let $G$ be the subgroup of $\PGL_2(\BF_q)$ that fixes
the element $\omega$. Then the set $B$ from 
Proposition~\textup{\ref{P:CosetRepsPGL2}} is a complete set of unique 
representatives for the left action of $G$ on 
${\BP^1(\BF_{q^2})\setminus\{\omega,\omega^q\}}$.
\end{lemma}

\begin{proof}
Let $r \colonequals \omega+\omega^q$ and let $s\colonequals \omega^{q+1}$. We 
check that the group $G$ is equal to 
\[
G = \Biggl\{ \begin{bmatrix}a & \phantom{a} - s b\\ b & a - r b \end{bmatrix}
             \ \bigg\vert \ \projtwo{a}{b} \in \BP^1(\BF_q) \Biggr\}\,.
\]
Let 
$\Phi\colonequals \twobytwo{-1}{\phantom{-}\omega^q}{\phantom{-}1}{-\omega\phantom{^q}},$
so that $\Phi$ sends $\omega$ to $\infty$ and $\omega^q$ to~$0$. We compute that
\begin{align*}
\Phi G \Phi^{-1} 
&= 
\Biggl\{ \begin{bmatrix} a - b\omega^q&0\\0 & a - b\omega\end{bmatrix} 
             \ \bigg\vert \  \projtwo{a}{b} \in \BP^1(\BF_q) \Biggr\}\\
&= 
\Biggl\{ \begin{bmatrix} (a - b\omega^q)/(a-b\omega)&0\\0 & 1\end{bmatrix} 
             \ \bigg\vert \  \projtwo{a}{b} \in \BP^1(\BF_q) \Biggr\}\,.             
\end{align*}
By Hilbert~$90$, the set of values attained by $(a - b\omega^q)/(a - b\omega)$ 
is equal to the set of elements of $\BF_{q^2}$ whose norms to $\BF_q$ are equal 
to~$1$, and these elements are precisely the powers of $\gamma^{q-1}$. Thus, the
action of $\Phi G\Phi^{-1}$ on $\BP^1(\BF_{q^2})\setminus\{\infty,0\}$ is 
generated by multiplication by $\gamma^{q-1}$, and it is easy to see that the 
values $1, \gamma, \ldots, \gamma^{q-2}$ are orbit representatives for this 
action. Applying $\Phi^{-1}$ to these orbit representatives will give us orbit
representatives for the action of $G$ on 
${\BP^1(\BF_{q^2})\setminus\{\omega,\omega^q\}}$, and we see that 
$\Phi^{-1}(\gamma^i) = (\omega\gamma^i+\omega^q)/(\gamma^i + 1)$. 
\end{proof}

\begin{proof}[Proof of Proposition~\textup{\ref{P:CosetRepsPGL2}}]
Suppose we are given an element $(\zeta,\eta,\theta)$ of $\PGL_2(\BF_{q^2})$. We
will show how to modify it by elements of $\PGL_2(\BF_q)$ to put it into one of 
the forms listed in the proposition. In the course of this demonstration, it
will become clear that the elements listed in the proposition do indeed lie in 
different $\PGL_2(\BF_q)$ orbits, because they are fixed by the following 
procedure.

Recall that $\omega$ is an element of $\BF_{q^2}\setminus\BF_q$. Given a triple
$\Gamma\colonequals (\zeta,\eta,\theta)$ representing an element of
$\PGL_2(\BF_{q^2})$, we do the following:

\begin{enumerate}
\item \emph{If $\zeta$ lies in $\BP^1(\BF_q)$}: 
      In this case, we can apply an element of $\PGL_2(\BF_q)$ that moves
      $\zeta$ to $\infty$. Our element of $\PGL_2(\BF_{q^2})$ can now be written
      $(\infty, \eta, \theta)$, for some new values of $\eta$ and $\theta$. We 
      can now only apply elements of $\PGL_2(\BF_q)$ that fix $\infty$; that is,
      we are limited to the so-called $ax+b$ group.
      \begin{enumerate}
      \item \emph{If $\eta$ lies in $\BF_q$}:
            In this case, we can use the $ax+b$ group to move $\eta$ to~$0$. Our
            element of $\PGL_2(\BF_{q^2})$ can now be written 
            $(\infty, 0, \theta)$, for some new value $\theta$. Now we can only 
            apply elements of $\PGL_2(\BF_q)$ that fix $\infty$ and~$0$; that
            is, we are limited to scalar multiplication.
            \begin{enumerate}
            \item \emph{If $\theta$ lies in $\BF_q$}:
                  In this case, we can scale $\theta$ so that it is equal
                  to~$1$. We obtain the element $(\infty,0,1)$ listed in part
                  \eqref{CRPGL21} of the proposition, and no further action of 
                  $\PGL_2(\BF_q)$ is possible.
            \item \emph{If $\theta$ does not lie in $\BF_q$}:
                  We can write $\theta = u\omega + v$ for elements $u,v$ of 
                  $\BF_q$, with $u$ nonzero. There is a unique scaling that will
                  put $\theta$ into the form $\omega + a$. We obtain an element
                  from part \eqref{CRPGL22} of the proposition.
            \end{enumerate}
      \item \emph{If $\eta$ does not lie in $\BF_q$}:
            Using the $ax+b$ group, we can move $\eta$ to $\omega$. There is no 
            further action of $\PGL_2(\BF_q)$ that fixes $\infty$ and $\omega$,
            so $\theta$ can be any element of $\BF_{q^2}$ other than $\omega$. 
            This gives us an element from part \eqref{CRPGL23} of the 
            proposition.
      \end{enumerate}
\item \emph{If $\zeta$ does not lie in $\BP^1(\BF_q)$}: 
      In this case, $\zeta$ is an element of $\BF_{q^2}\setminus\BF_{q}$, and we
      can use the $ax+b$ subgroup of $\PGL_2(\BF_q)$ to move $\zeta$ 
      to~$\omega$. The only elements of $\PGL_2(\BF_q)$ that we can apply once
      we have fixed $\zeta = \omega$ are the elements of the group $G$ from 
      Lemma~\ref{L:OrbitRepsG}.
      \begin{enumerate}
      \item \emph{If $\eta$ is equal to $\omega^q$}:
            If $\eta=\omega^q$ then the action of $G$ fixes $\eta$. We know that
            $\theta$ is different from both $\omega$ and $\omega^q$, so by 
            Lemma~\ref{L:OrbitRepsG} we can use $G$ to move $\theta$ to a unique
            element of the set $B$. This gives us an element from part 
            \eqref{CRPGL24} of the proposition.
      \item \emph{If $\eta$ is not equal to $\omega^q$}:
            We can use $G$ to move $\eta$ to a unique element of~$B$. Once we 
            have normalized $\eta$ in this way, there is no further action of 
            $\PGL_2(\BF_q)$ that fixes $\omega$ and~$\eta$, so $\theta$ can be
            any element of $\BP^1(\BF_{q^2})$ other than $\omega$ and $\eta$. 
            This gives us an element from part \eqref{CRPGL25} of the
            proposition.
      \end{enumerate}  
\end{enumerate}
These cases enumerate all of the possibilities for our element 
$(\zeta,\eta,\theta)$, so the proposition is proved.
\end{proof}

\section{Explicit coset representatives for 
         \texorpdfstring{$\PGL_2(\BF_q)$}{PGL2 Fq}
         in \texorpdfstring{$\PGL_2(\BF_{q^p})$}{PGL2 Fqp}}
\label{S:CosetRepsPGLp}

It is not necessary for proving our main theorem, but there is a result
analogous to Proposition~\ref{P:CosetRepsPGL2} for the cosets of $\PGL_2(\BF_q)$
in $\PGL_2(\BF_{q^p})$, where $p$ is an odd prime. As before, we represent
elements of $\PGL_2(\BF_{q^p})$ as triples $(\zeta,\eta,\theta)$ of distinct
elements of $\BP^1(\BF_{q^p})$, indicating where the given element of 
$\PGL_2(\BF_{q^p})$ sends $\infty$, $0$, and $1$.

\begin{proposition}
\label{P:CosetRepsPGLp}
Let $q$ be a prime power and let $p$ be an odd prime. Let $C$ be a set of orbit
representatives for the action of $\PGL_2(\BF_q)$ on $\BF_{q^p}\setminus\BF_q$, 
let $C_{\infty} $ be a set of orbit representatives for the action of the $ax+b$
subgroup of $\PGL_2(\BF_q)$ on $\BF_{q^p}\setminus\BF_q$, and let $C_{\infty,0}$
be a set of orbit representatives for the multiplicative action of
$\BF_q^\times$ on $\BF_{q^p}\setminus\BF_q$. 

The following elements give a complete set of unique coset representatives for 
the left action of $\PGL_2(\BF_q)$ on $\PGL_2(\BF_{q^p})$\textup{:}
\begin{enumerate}
\item \label{CRPGLp1}
      $(\infty, 0, 1)$\textup{;}
\item \label{CRPGLp2}
      $\bigl\{ (\infty, 0, \theta) 
       \ \vert \ \theta\in C_{\infty,0}\bigr\}$\textup{;}
\item \label{CRPGLp3}
      $\bigl\{ (\infty, \eta, \theta) 
       \ \vert \ \eta\in C_{\infty}, \ \theta \in \BF_{q^p} 
       \text{\ with\ } \theta \neq \eta \bigr\}$\textup{;}
\item \label{CRPGLp4}
      $\bigl\{ (\zeta, \eta, \theta) 
       \ \vert \ \zeta\in C,  \ \eta, \theta \in \BP^1(\BF_{q^p})
       \text{\ with\ } \eta\neq \zeta 
       \text{\ and\ } \theta \neq \zeta 
       \text{\ and\ } \theta\ne\eta \bigr\}$.
\end{enumerate}
\end{proposition}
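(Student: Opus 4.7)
The plan is to follow the same case-by-case normalization strategy used in the proof of Proposition~\ref{P:CosetRepsPGL2}, splitting on where the first coordinate $\zeta$ of a triple $(\zeta,\eta,\theta)\in\PGL_2(\BF_{q^p})$ lies. The key simplification available when $p\geq 3$ is that the stabilizer in $\PGL_2(\BF_q)$ of any element $\zeta\in\BF_{q^p}\setminus\BF_q$ is trivial: since $p$ is prime, the only proper subfield of $\BF_{q^p}$ is $\BF_q$, so $\zeta$ generates $\BF_{q^p}$ over $\BF_q$ and has $p$ distinct Galois conjugates $\zeta,\zeta^q,\ldots,\zeta^{q^{p-1}}$; any $\Gamma\in\PGL_2(\BF_q)$ fixing $\zeta$ commutes with Frobenius, hence fixes all of these conjugates, and for $p\geq 3$ that gives at least three fixed points, forcing $\Gamma$ to be the identity of $\PGL_2$.

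The case analysis then runs as follows. If $\zeta\in\BP^1(\BF_q)$, I would apply an element of $\PGL_2(\BF_q)$ to move $\zeta$ to $\infty$, reducing the remaining freedom to the $ax+b$ subgroup. If additionally $\eta\in\BF_q$, I move $\eta$ to $0$ (leaving only scalar multiplication), and then scale $\theta$ to $1$ or to a unique element of $B_{\infty,0}$ according as $\theta\in\BF_q$ or not; this yields cases~\eqref{CRPGLp1} and~\eqref{CRPGLp2}. If instead $\eta\notin\BF_q$, I normalize $\eta$ to its unique representative in $B_\infty$; the same Galois-conjugate argument, applied inside the $ax+b$ subgroup, shows the remaining stabilizer is trivial, so $\theta$ may be any element of $\BP^1(\BF_{q^p})$ distinct from $\infty$ and $\eta$, giving case~\eqref{CRPGLp3}. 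Finally, if $\zeta\notin\BP^1(\BF_q)$, I normalize $\zeta$ to its unique representative in $B$; by the observation above no further action of $\PGL_2(\BF_q)$ remains, so any distinct $\eta,\theta\in\BP^1(\BF_{q^p})\setminus\{\zeta\}$ with $\eta\ne\theta$ yield a distinct coset, producing case~\eqref{CRPGLp4}.

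Uniqueness across the four cases is automatic, since they are mutually exclusive (distinguished by the $\BP^1(\BF_q)$-type of $\zeta$ and then of $\eta$), and uniqueness within each case is built into the definitions of $B$, $B_\infty$, and $B_{\infty,0}$. There is no real obstacle here: the only new content beyond Proposition~\ref{P:CosetRepsPGL2} is the triviality of the stabilizer of $\zeta\in\BF_{q^p}\setminus\BF_q$, after which the argument is routine bookkeeping parallel to the $p=2$ case.
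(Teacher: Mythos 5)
Your proposal is correct and follows essentially the same route as the paper: the same case split on whether $\zeta$ (and then $\eta$, $\theta$) lies in $\BP^1(\BF_q)$, with the key point being that an element of $\BF_{q^p}\setminus\BF_q$ has trivial $\PGL_2(\BF_q)$-stabilizer when $p$ is an odd prime. The paper phrases that last point as ``points with nontrivial stabilizer lie in $\BP^1(\BF_{q^2})$'' while you argue via the $p\ge 3$ Galois conjugates all being fixed, but these are the same observation.
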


\begin{proof}
The proof is much like that of Proposition~\ref{P:CosetRepsPGL2}, but simpler.
Suppose we are given an arbitrary $(\zeta,\eta,\theta)$ in $\PGL_2(\BF_{q^p})$.
If $\zeta$ and $\eta$ both lie in $\BP^1(\BF_q)$, we can move them to $\infty$
and $0$, and then we can only modify $\theta$ by scaling by elements 
of~$\BF_q^\times$. If $\theta$ lies in $\BF_q$ we get case~\eqref{CRPGLp1}, and
if not we get case~\eqref{CRPGLp2}.

If $\zeta$ lies in $\BP^1(\BF_q)$ but $\eta$ does not, we move $\zeta$ to 
$\infty$ using $\PGL_2(\BF_q)$. Then the only action of $\PGL_2(\BF_q)$ we have 
left to us is the $ax+b$ subgroup. Since $\eta$ lies in 
$\BF_{q^p}\setminus\BF_q$, we can use this subgroup to move $\eta$ to an element
of $C_\infty$. Then $\theta$ can be arbitrary, as long as it is different from
$\infty$ and from $\eta$. This gives us case~\eqref{CRPGLp3}.

If $\zeta$ does not lie in $\BP^1(\BF_q)$ then we can move $\zeta$ using 
$\PGL_2(\BF_q)$ so that it lies in $C$, and there is no further action of 
$\PGL_2(\BF_q)$ left available to us, because the only elements of 
$\BP^1(\BFbar_q)$ with nontrivial $\PGL_2(\BF_q)$ stabilizers lie in 
$\BP^1(\BF_{q^2})$. This gives us case~\eqref{CRPGLp4}.
\end{proof}

This result is useful because we can compute the sets of representatives we 
need. 

\begin{algorithm}
\label{A:PGLreps}
Orbit representatives for $\PGL_2(\BF_q)$ acting on the elements of $\BF_{q^n}$ 
that do not lie in proper subfields.
\begin{alglist}
\algin  A prime power $q$ and an integer $n\ge 3$.
\algout A complete set of unique representatives for the action of 
        $\PGL_2(\BF_q)$ on the the elements of $\BF_{q^n}$ that do not lie in
        proper subfields.
\item Set $L$ and $M$ to be empty lists.
\item Construct a copy of $\BF_{q^n}$ with an $\BF_q$-basis 
      $(\beta_1,\ldots,\beta_n)$ such that $\beta_1$ appears with nonzero 
      coefficient in the representation of~$1$.
\item \label{PGLreps3}
      If $n=3$ return a list containing the single element $\beta_2$, and stop.      
\item For every $\alpha\in\BF_{q^n}$ that does not lie in a proper subfield, and
      whose representation $(a_1,\ldots, a_n)$ on the given basis has $a_1=0$
      and has $a_i = 1$ for the first $i$ with $a_i\ne 0$, do:
      \begin{algsublist}
      \item Compute $\alpha^{q^i}$ for $i=1,\ldots,n-1$. If any of these
            conjugates is smaller than or equal to $\alpha$ under a fixed
            ordering $<$, continue on to the next value of $\alpha$.
      \item Compute the minimal polynomial $f$ of $\alpha$.
      \item Find the (unique) irreducible factor $g$ of $\Cross(f)$.
      \item Append the pair $(g, \alpha)$ to $L$.
      \end{algsublist}
\item Sort $L$.
\item Delete every element $(g, \alpha)$ of $L$ such that $g$ appears as a first
      entry of an element earlier in the list.
\item For every $(g,\alpha)$ in $L$, do:
      \begin{algsublist}
      \item For $i = 0,\ldots, \deg g - 1$, append the element $\alpha^{q^i}$ 
            to~$M$.
      \end{algsublist}
\item Return $M$.
\end{alglist}
\end{algorithm}

\begin{proposition}
Algorithm~\textup{\ref{A:PGLreps}} produces a complete list of unique 
representatives for the orbits of $\PGL_2(\BF_q)$ acting on the elements of 
$\BF_{q^n}$ that lie in no proper subfield. It runs in time $\Otilde(q^{n-2})$.
\end{proposition}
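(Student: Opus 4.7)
The plan is to establish correctness by relating $\PGL_2(\BF_q)$-orbits on $\BF_{q^n}$ to the generic orbits on minimal polynomials (via Theorem~\ref{T:invariant}), showing how each generic orbit decomposes into the right number of element-orbits, and confirming that the iteration captures at least one representative per generic orbit. The timing analysis then follows directly.

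First I would show that for $n\ge 3$, any $\alpha\in\BF_{q^n}$ lying in no proper subfield has trivial $\PGL_2(\BF_q)$-stabilizer: any stabilizing $\Gamma$ has $\BF_q$-entries and so commutes with Frobenius, hence also fixes the three distinct points $\alpha$, $\alpha^q$, $\alpha^{q^2}$, forcing $\Gamma$ to be the identity. Every such orbit therefore has size $|\PGL_2(\BF_q)|$. Next, writing $f$ for the minimal polynomial of $\alpha$, the minimal polynomial of $\Gamma(\alpha)$ is $\Gamma(f)$ under the generic action, so the roots of the polynomials in a generic orbit $\mathcal{F}$ form a $\PGL_2(\BF_q)$-invariant set $R_{\mathcal{F}}$. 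Let $d$ be the smallest positive integer for which some $\Gamma\in\PGL_2(\BF_q)$ satisfies $\Gamma(\alpha)=\alpha^{q^d}$. Because $\Gamma$ commutes with Frobenius, the generic-action stabilizer of $f$ is cyclic of order $n/d$, so a size count shows that $R_{\mathcal{F}}$ partitions into exactly $d$ orbits under $\PGL_2(\BF_q)$, and the Galois conjugates $\alpha,\alpha^q,\ldots,\alpha^{q^{d-1}}$ give one representative for each of these orbits (since $\alpha^{q^i}$ and $\alpha^{q^j}$ are equivalent under $\PGL_2(\BF_q)$ iff $d\mid j-i$).

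The key algorithmic link is the identity $\deg g=d$. A stabilizing $\Gamma$ carries $(\alpha,\alpha^q,\alpha^{q^2},\alpha^{q^3})$ to $(\alpha^{q^d},\alpha^{q^{d+1}},\alpha^{q^{d+2}},\alpha^{q^{d+3}})$ while preserving cross ratios, so $\rho=\rho^{q^d}$; the minimality of $d$ excludes $\rho\in\BF_{q^{d'}}$ for $d'<d$, whence $\rho$ has degree exactly $d$ over $\BF_q$ and $\Cross(f)=g^{n/d}$ with $\deg g=d$. Together with Theorem~\ref{T:invariant}, this shows that each generic orbit determines, and is determined by, a single value of $g$.

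Putting the pieces together, the iteration runs over basis-restricted $\alpha$ and retains only those that are the strict minimum of their Galois orbit, producing pairs $(g,\alpha)$ that cover every generic orbit at least once (since $R_{\mathcal{F}}$ is closed under both the affine subgroup and Frobenius, so it must contain a basis-restricted element that is the minimum of its own Galois orbit). The sort-and-delete step then leaves exactly one pair per distinct $g$, i.e., one per generic orbit, and the final expansion to $\{\alpha^{q^i}:0\le i<\deg g\}$ supplies the $d$ distinct orbit representatives required, proving correctness. The main technical hurdle will be justifying the ``at least one surviving pair per generic orbit'' claim with full rigor; I would handle this by tracking how the basis normalization interacts with Frobenius inside each $R_{\mathcal{F}}$, an argument I expect to be somewhat tedious but straightforward. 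The timing is routine: $O(q^{n-2})$ candidates, $\Otilde(1)$ $\BF_q$-operations per candidate (to compute conjugates, minimal polynomial, and cross polynomial in the fixed-degree extension $\BF_{q^n}$), and $\Otilde(q^{n-2})$ for sorting and list processing.
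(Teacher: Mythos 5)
Your argument follows the same route as the paper's (very terse) proof, and the parts you work out in detail are correct and genuinely fill in what the paper only asserts: the triviality of stabilizers of elements in no proper subfield, the identity $\deg g=d$ where $d$ generates the Frobenius-exponents reachable by $\PGL_2(\BF_q)$, and the conclusion that the Galois orbit of $\alpha$ meets exactly $\deg g$ of the $\PGL_2(\BF_q)$-orbits, with $\alpha,\alpha^q,\dots,\alpha^{q^{\deg g-1}}$ as representatives. This is precisely the content of the paper's one-sentence justification, and your derivation of it (via the cross ratio satisfying $\rho=\rho^{q^d}$ and the converse from the proof of Theorem~\ref{T:invariant}) is the right one.

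The gap is exactly where you suspected it, and your proposed repair does not close it. You claim that because $R_{\mathcal{F}}$ is closed under the $ax+b$ subgroup and under Frobenius, it must contain an element that is simultaneously basis-restricted and the minimum of its Galois orbit. Closure under the two group actions gives you two transversals inside $R_{\mathcal{F}}$ --- the basis-restricted elements (one per $ax+b$-orbit) and the Galois-minimal elements (one per Frobenius orbit) --- but nothing forces these transversals to intersect: normalizing the coordinates does not commute with the ordering $<$, and a counting argument fails badly ($R_{\mathcal{F}}$ has about $q^3 d$ elements, of which only about $qd$ are basis-restricted and only about $q^3d/n$ are Galois-minimal). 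For an adversarial choice of $<$ it is conceivable that step 3(a) of Algorithm~\ref{A:PGLreps} discards \emph{every} basis-restricted element of some $R_{\mathcal{F}}$, in which case the corresponding $g$ never enters $L$ and the output misses $d$ orbits. (The paper's own proof is silent on this point as well.) The clean fix is to observe that step 3(a) is only a pruning optimization and that uniqueness is already guaranteed by the sort-and-delete on $g$: either drop 3(a) for the correctness argument, or weaken it to ``discard $\alpha$ only if some conjugate $\alpha^{q^i}$ is both basis-restricted and smaller,'' which guarantees that each Galois orbit containing a basis-restricted element retains exactly one survivor. With that adjustment your coverage argument (every $R_{\mathcal{F}}$ contains basis-restricted elements, hence contributes at least one pair $(g,\alpha)$) goes through, and the rest of your proof, including the timing analysis, is fine.
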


\begin{proof}
When $n=3$, the group $\PGL_2(\BF_q)$ acts transitively on
$\BP_{q^3}\setminus\BF_q$, so step~\eqref{PGLreps3} gives correct output. For 
$n>3$, Algorithm~\ref{A:PGLreps} is a variation on Algorithm~\ref{A:naivelist}.
The only additional fact we must note is that there is an element of 
$\PGL_2(\BF_q)$ that takes $\alpha$ to one of its nontrivial conjugates if and
only if the cross polynomial of $f$ is not irreducible, and that the order of
each such element of $\PGL_2(\BF_q)$ is equal to the exponent $e$ such that
$\Cross(f) = g^e$. Thus, the Galois orbit of $\alpha$ contains representatives
of exactly $\deg g$ $\PGL_2(\BF_q)$ orbits.
\end{proof}

Algorithm~\ref{A:PGLreps} gives us a method to calculate the set $C$ from
Proposition~\ref{P:CosetRepsPGLp}. The sets $C_\infty$ and $C_{\infty,0}$ can be
computed in similar (but simpler) ways; we leave the details to the reader.

\section{Enumerating \texorpdfstring{$\PGL_2(\BF_q)$}{PGL2 Fq} orbit 
         representatives for \texorpdfstring{$\Sym^n(\BF_q)$}{Sym n Fq}}
\label{S:enumerating}

In this section we present our algorithm for enumerating orbit representatives
for the action of $\PGL_2(\BF_q)$ on $\Sym^n(\BF_q)$ in time $\Otilde(q^{n-3})$,
for $n$ fixed and $q$ varying. The algorithm consists of a number of different
algorithms, each addressing a subset of elements of $\Sym^n(\BF_q)$. The problem
is trivial when $n\le 3$, so we will always assume that $n\ge 4$, and for 
one case we will also demand that $n$ be even. This is sufficient for our 
application to enumerating hyperelliptic curves of genus~$g$, where $n = 2g+2$
is even and at least~$6$. (See~\cite{Howe2024} for an algorithm that works for
all~$n$.)

Recall that an element of $\Sym^n(\BF_q)$ is a set 
$A = \{ \alpha_1, \ldots, \alpha_n\}$ of $n$ distinct elements of
$\BP^1(\BFbar_q)$ that is stable under the action of the Galois group of 
$\BFbar_q$ over~$\BF_q$. An element of $\Sym^n(\BF_q)$ is \emph{primitive} if 
the degree of each extension $\BF_q(\alpha_i)$ over $\BF_q$ is equal to~$n$.
Every element $A$ of $\Sym^n(\BF_q)$ can be written in a unique way (up to
order) as the union of a collection $\{A_i\}$ of primitive elements $A_i$ of 
$\Sym^{m_i}(\BF_q)$, for some sequence of integers $m_i$ with $\sum m_i = n$.
The sequence $(m_i)_i$, listed in non-increasing order, is the 
\emph{Galois type} of~$A$. If $f$ is the monic homogeneous polynomial whose zero
set is~$A$, then $(m_i)_i$ is also the list of the degrees of the irreducible 
factors of $f$, and we also refer to this sequence as the Galois type of~$f$. We
will enumerate the $\PGL_2(\BF_q)$ orbits of $\Sym^n(\BF_q)$ by enumerating each
Galois type separately.

Let $M\colonequals (m_1, m_2, \ldots, m_r)$ be a Galois type for 
$\Sym^n(\BF_q)$, so that $m_1\ge m_2\ge \cdots \ge m_r>0$ and 
$n = m_1 + \cdots + m_r$. In the following subsections we show how to enumerate
the $\PGL_2(\BF_q)$ orbits of the elements of $\Sym^n(\BF_q)$ of this Galois
type, based on the value of $m_1$.

\subsection{The case \texorpdfstring{$m_1 = 1$}{m1 = 1}} 
Every element $A$ of $\Sym^n(\BF_q)$ of this Galois type is simply a collection
of $n$ distinct elements of $\BP^1(\BF_q)$. We can specify a standard form for 
such elements $A$ by considering all possible choices of three distinct points 
$a_i$, $a_j$, and $a_k$ in~$A$, and using an element $\Gamma$ of $\PGL_2(\BF_q)$
to move those three points to $\infty$, $0$, and $1$, respectively. To this
choice we associate the polynomial $f$ of degree $n-1$ defined by
\[
f\colonequals y \prod_{\ell\ne i} (x - \Gamma(a_\ell)y)\,.
\]
Our standard form for $A$ is the smallest polynomial $f$ obtained in this way,
under an arbitrary total ordering $<$ of the monic homogeneous polynomials of 
degree~$n$.

Our algorithm for enumerating orbit representatives of this Galois type is as
follows. 

\begin{algorithm}
\label{A:1}
Orbit representatives for $\PGL_2(\BF_q)$ acting on the elements of
$\Sym^n(\BF_q)$ of Galois type $(1,1,\ldots,1)$.
\begin{alglist}
\algin  A prime power $q$ and an integer $n\ge 4$.
\algout A complete set of unique representatives for the action of 
        $\PGL_2(\BF_q)$ on the monic homogenous polynomials of degree~$n$ and 
        Galois type $(1,1,\ldots,1)$.
\item Set $L$ to be the empty list, and set $a_1\colonequals\infty$, 
      $a_2\colonequals 0$, and $a_3\colonequals 1$.
\item \label{1dedupe}
      For every set $\{a_4,\ldots,a_n\}$ of distinct elements of 
      $\BF_q\setminus\{0,1\}$ do:
      \begin{algsublist}
      \item Set $f\colonequals y \prod_{i=2}^n (x-a_i y)$.
      \item Set $F\colonequals\{\Gamma(f)\}$, where $\Gamma$ ranges over the 
            elements of $\PGL_2(\BF_q)$ that send three elements of $\{a_i\}$ to
            $\infty$, $0$, and~$1$.
      \item If $f$ is the smallest element of $F$ under the ordering $<$, append
            $f$ to~$L$.
      \end{algsublist}
\item Return $L$.
\end{alglist}
\end{algorithm}

\begin{proposition}
Algorithm~\textup{\ref{A:1}} produces a complete set of unique representatives
for the orbits of $\PGL_2(\BF_q)$ acting on the monic homogeneous degree-$n$
polynomials of Galois type $(1,1,\ldots,1)$. It runs in time $\Otilde(q^{n-3})$,
measured in arithmetic operations in $\BF_q$. \qed
\end{proposition}

\subsection{The case \texorpdfstring{$m_1 = 2$}{m1 = 2}} 
When $m_1=2$ the possible Galois types $(m_1,\ldots,m_r)$ consist of $s$ values 
of $2$ and $t$ values of $1$, where $2s+t=n$ and $s>0$. We present two 
algorithms, one that applies when $t\ge 3$ and one that applies when $s\ge 2$.
Since we are assuming throughout that $n\ge 4$, the only remaining case
is when $s=1$ and $t=2$, but that situation is handled by~Theorem~\ref{T:211}.

\begin{algorithm}
\label{A:2a}
Orbit representatives for $\PGL_2(\BF_q)$ acting on the elements of 
$\Sym^n(\BF_q)$ of Galois type of the form $(2, 2,\ldots,2, 1, \ldots, 1)$, with 
$s$ entries of $2$ and $t$ entries of $1$, where $t\ge 3$.
\begin{alglist}
\algin  A prime power $q$, an integer $n\ge 4$, and integers $s$ and $t$
        with $2s+t=n$ and $t\ge 3$.
\algout A complete set of unique representatives for the action of 
        $\PGL_2(\BF_q)$ on the monic homogenous polynomials of degree~$n$ with
        the given Galois type.
\item Set $L$ to be the empty list, and set $a_1\colonequals\infty$,
      $a_2\colonequals 0$, and $a_3\colonequals 1$.
\item Create a list $I_2$ of the monic irreducible homogeneous quadratics 
      over~$\BF_q$.
\item \label{2adedupe}
      For every set $\{a_4,\ldots,a_t\}$ of distinct elements of 
      $\BF_q\setminus\{0,1\}$ and every set $\{g_1,\ldots, g_s\}$ of distinct
      elements of $I_2$ do:
      \begin{algsublist}
      \item Set $f\colonequals y \prod_{i=2}^t (x-a_i y)\cdot\prod_{i=1}^s g_i$.
      \item Set $F\colonequals\{\Gamma(f)\}$, where $\Gamma$ ranges over the 
            elements of $\PGL_2(\BF_q)$ that send three elements of $\{a_i\}$ to
            $\infty$, $0$, and $1$.
      \item If $f$ is the smallest element of $F$ under the ordering $<$, append
            $f$ to~$L$.
      \end{algsublist}
\item Return $L$.
\end{alglist}
\end{algorithm}

\begin{proposition}
Algorithm~\textup{\ref{A:2a}} produces a complete set of unique representatives
for the orbits of $\PGL_2(\BF_q)$ acting on the monic homogeneous degree-$n$ 
polynomials of Galois type $(2, 2,\ldots,2, 1, \ldots, 1)$, with $s$ entries of
$2$ and $t\ge 3$ entries of $1$. It runs in time $\Otilde(q^{n-3})$, measured in
arithmetic operations in $\BF_q$. \qed
\end{proposition}

\begin{algorithm}
\label{A:2b}
Orbit representatives for $\PGL_2(\BF_q)$ acting on the elements of 
$\Sym^n(\BF_q)$ of Galois type of the form $(2, 2,\ldots,2, 1, \ldots, 1)$, with
$s$ entries of $2$ and $t$ entries of $1$, where $s\ge 2$.
\begin{alglist}
\algin  A prime power $q$, an integer $n\ge 4$, and integers $s$ and $t$ 
        with $2s+t=n$ and $s\ge 2$.
\algout A complete set of unique representatives for the action of 
        $\PGL_2(\BF_q)$ on the monic homogenous polynomials of degree~$n$ with
        the given Galois type.
\item Set $L$ to be the empty list.
\item Let $I_1$ be the set of polynomials $\{y\}\cup\{ x-ay \,:\, a\in\BF_q\}$.
\item Create a list $I_2$ of the monic irreducible homogeneous quadratics 
      over~$\BF_q$.
\item \label{2bdedupe}
      For every pair $(f_1,f_2)$ of irreducible quadratic factors obtained from 
      Theorem~\ref{T:twoquadlist} or Theorem~\ref{T:quarticschar2}, for every
      set $\{f_3,\ldots, f_s\}$ of elements of $I_2$ such that the quadratics
      $f_1,\ldots, f_s$ are distinct, and for every set $\{g_1,\ldots, g_t\}$ of
      distinct elements of $I_1$, do:
      \begin{algsublist}
      \item Set $f\colonequals \prod_{i=1}^s f_i \cdot \prod_{i=1}^t g_i$.
      \item Set $F\colonequals\{\Gamma(f)\}$, where $\Gamma$ ranges over the
            elements of $\PGL_2(\BF_q)$ that send a pair of elements of 
            $\{ f_i\}$ to the representative of their $\PGL_2(\BF_q)$ orbit,
            calculated using the table mentioned at the end of 
            Remark~\ref{R:invertmu}.
      \item If $f$ is the smallest element of $F$ under the ordering $<$, append
            $f$ to~$L$.
      \end{algsublist}
\item Return $L$.
\end{alglist}
\end{algorithm}

\begin{remark}
\label{R:2broots}
The most direct way of implementing step~\eqref{2bdedupe}(b) involves computing
the roots of various irreducible quadratics in a fixed copy of $\BF_{q^2}$. 
From~\cite[Theorem~1.2]{Lenstra1991}, we know that this can be done in time
polynomial in $\log q$.
\end{remark}

\begin{proposition}
Algorithm~\textup{\ref{A:2b}} produces a complete set of unique representatives
for the orbits of $\PGL_2(\BF_q)$ acting on the monic homogeneous degree-$n$ 
polynomials of Galois type $(2, 2,\ldots,2, 1, \ldots, 1)$, with $s\ge 2$ 
entries of $2$ and $t$ entries of $1$. It runs in time $\Otilde(q^{n-3})$,
measured in arithmetic operations in $\BF_q$. \qed
\end{proposition}

\subsection{The case \texorpdfstring{$3\le m_1\le n-1$}{3 <= m1 <= n-1}} 
Let $(m_1,\ldots,m_r)$ be a Galois type with $3\le m_1\le n-1$. When $m_1>3$ we
will make use of the list of $\PGL_2(\BF_q)$ orbit representatives of 
irreducible polynomials of degree~$m_1$ provided by Algorithm~\ref{A:naivelist},
which will not exceed our claimed time bound of $\Otilde(q^{n-3})$ because
$m_1-2\le n-3$. When $m_1 = 3$ we will use the fact that there is exactly one 
$\PGL_2(\BF_q)$ orbit of irreducible degree-$3$ polynomials, so we can take our 
favorite irreducible polynomial of degree $3$ as the sole orbit representative.

\begin{algorithm}
\label{A:medium}
Orbit representatives for $\PGL_2(\BF_q)$ acting on the elements of
$\Sym^n(\BF_q)$ of Galois type $(m_1,\ldots,m_r)$, with $3\le m_1\le n-1$.
\begin{alglist}
\algin  A prime power $q$, an integer $n\ge 4$, and a Galois type with
        $3\le m_1\le n-1$.
\algout A complete set of unique representatives for the action of 
        $\PGL_2(\BF_q)$ on the monic homogenous polynomials of degree~$n$ with
        the given Galois type.
\item Set $L$ to be the empty list.
\item \label{mediumdegree}
      For each value of $m_i$ in the set $\{m_2,\ldots,m_r\}$, create a list
      $I_{m_i}$ of the monic irreducible homogeneous polynomials of degree~$m_i$.
\item \label{mediumlist}
      If $m_1>3$, let $L_1$ be the output of Algorithm~\ref{A:naivelist} 
      associated to the inputs $q$ and $m_1$ and let $S$ be the list of orbit
      representatives of irreducible polynomials of degree~$m_1$ obtained as the
      second elements of each pair on the list $L_1$.
\item If $m_1=3$ let $S$ be the single-element list consisting of an arbitrary
      irreducible polynomial of degree~$3$. 
\item \label{mediumdedupe}
      For every element $f_1$ of $S$ and every set of distinct polynomials
      $\{f_2, \ldots, f_r\}$ with $f_i\in I_{m_i}$ do:
      \begin{algsublist}
      \item Set $f\colonequals \prod_{i=1}^r f_i.$
      \item Let $M_1$ be the set of $f_i$ of degree $m_1$ and set 
            $F\colonequals\{\Gamma(f)\}$, where $\Gamma$ ranges over the
            elements of $\PGL_2(\BF_q)$ that send an element of $M$ to its
            associated orbit representative, obtained by computing its cross
            polynomial and using the lookup table $L_1$ if $m_1>3$ and by direct
            calculation if $m_1=3$.
      \item If $f$ is the smallest element of $F$ under the ordering $<$, append 
            $f$ to~$L$.
      \end{algsublist}
\item Return $L$.
\end{alglist}
\end{algorithm}

\begin{remark}
As in the similar situation in Algorithm~\ref{A:2b}, we can accomplish
step~\eqref{mediumdedupe}(b) by computing the roots of various irreducible
polynomials of degree $m$ in a fixed copy of~$\BF_{q^m}$, 
and~\cite[Theorem~1.2]{Lenstra1991} shows that we can do this in time polynomial
in $\log q$.
\end{remark}

\begin{proposition}
Algorithm~\textup{\ref{A:medium}} produces a complete set of unique 
representatives for the orbits of $\PGL_2(\BF_q)$ acting on the monic 
homogeneous degree-$n$ polynomials of the given Galois type. It runs in time 
$\Otilde(q^{n-3})$, measured in arithmetic operations in $\BF_q$.
\end{proposition}

\begin{proof}
The correctness of the algorithm is clear, because every $\PGL_2(\BF_q)$ orbit
of the given Galois type has a representative considered by the algorithm, and 
duplicates are prevented by steps~\eqref{mediumdedupe}(b)
and~\eqref{mediumdedupe}(c).

For each $m_i$ in step~\eqref{mediumdegree}, the time required to compute the
list $I_{m_i}$ is $\Otilde(q^{m_i})$, and since $m_i\le n-m_1\le n-3$ this is
$\Otilde(q^{n-3})$. The time required for step~\eqref{mediumlist} is
$\Otilde(q^{m_1-2})$, and since $m_1-2\le n-3$, this is also $\Otilde(q^{n-3})$. 
And in step~\eqref{mediumdedupe}, we consider $O(q^{n-3})$ tuples 
$(f_1,\ldots,f_r)$, and each takes time $\Otilde(1)$ to process. Thus, the total
time required is as claimed.
\end{proof}

\subsection{The case \texorpdfstring{$m_1 = n$}{m1 = n}}
\label{SS:n}
This is the first and only case in which we will require $n$ to be even. The 
case $n=4$ is covered by Theorems~\ref{T:quartics} and~\ref{T:quarticschar2}, so
we may assume that $n\ge 6$. Note that we cannot just apply
Algorithm~\ref{A:naivelist}, because that takes time $\Otilde(q^{n-2})$, and we
want an algorithm that takes time
$\Otilde(q^{n-3})$.

\begin{algorithm}
\label{A:n}
Orbit representatives for $\PGL_2(\BF_q)$ acting on the elements of
$\Sym^n(\BF_q)$ of Galois type $(n)$, where $n\ge 6$ is even.
\begin{alglist}
\algin  A prime power $q$ and an even integer $n\ge 6$.
\algout A complete set of unique representatives for the action of 
        $\PGL_2(\BF_q)$ on the monic irreducible homogenous polynomials of 
        degree~$n$.
\item \label{none}
      If $n=6$, let $M$ be the list consisting of a single monic irreducible
      cubic homogeneous polynomial in $\BF_{q^2}[x,y]$.
\item If $n=8$, let $M$ be the list consisting of the monic irreducible quartic
      homogeneous polynomials in $\BF_{q^2}[x,y]$ given by 
      Theorem~\ref{T:quartics} or Theorem~\ref{T:quarticschar2} applied to the
      field $\BF_{q^2}$.
\item \label{nthree}
      If $n\ge 10$, use Algorithm~\ref{A:naivelist} to create a list $M$ of
      orbit representatives for the action of $\PGL_2(\BF_{q^2})$ acting on the
      monic irreducible homogeneous polynomials of degree $n/2$ in 
      $\BF_{q^2}[x,y]$.
\item \label{nfour}
      Let $G$ be the list of coset representative for the left action of 
      $\PGL_2(\BF_q)$ on $\PGL_2(\BF_{q^2})$ from 
      Proposition~\ref{P:CosetRepsPGL2}.
\item \label{nfive}
      Let $N$ be the list consisting of $\Gamma(f)$, for all $\Gamma\in G$ and
      $f$ in $M$.
\item \label{nsix}
      Let $L$ be the list of all products $g g^{(q)}$ for $g\in N$, where the
      superscript $(q)$ means to raise each coefficient of a polynomial to the 
      $q$th power.
\item Let $L'$ be the list of all pairs $(\Cross f, f)$ for $f\in L$.
\item Sort $L'$, and then delete every entry $(\Cross f, f)$ where $\Cross f$
      appears as the first element of an earlier entry in $L'$.
\item \label{nnine}
      Let $L''$ be the list of second elements of the entries in $L'$.
\item Return $L''$.
\end{alglist}
\end{algorithm}

\begin{proposition}
Algorithm~\textup{\ref{A:n}} produces a complete set of unique representatives
for the orbits of $\PGL_2(\BF_q)$ acting on the monic irreducible homogeneous
polynomials of degree~$n$. It runs in time $\Otilde(q^{n-3})$, measured in 
arithmetic operations in $\BF_q$.
\end{proposition}

\begin{proof}
To prove correctness, we must show that the list $L''$ consists of unique
representatives for each $\PGL_2(\BF_q)$ orbit of irreducible polynomials of
degree $n$ over~$\BF_q$. First we show that it contains at least one 
representative from each orbit.

We know that every monic irreducible homogeneous polynomial $f$ of degree $n$ in
$\BF_q[x,y]$ can be written $g g^{(q)}$ for a monic irreducible homogeneous
polynomial $g$ in $\BF_{q^2}[x,y]$ of degree $n/2$, and $g$ is unique up to 
$g\leftrightarrow g^{(q)}$. If we can show that the list $N$ contains an element
in every orbit of $\PGL_2(\BF_q)$ acting on the left on the set $S$ of monic 
irreducible homogeneous polynomials of degree $n/2$ in $\BF_{q^2}[x,y]$, then 
that will show that $L$ contains at least one element in every orbit of
$\PGL_2(\BF_q)$ acting on the monic irreducible homogeneous polynomials of
degree $n$ in $\BF_q[x,y]$. But since $M$ is a list of representative for the 
orbits of $\PGL_2(\BF_{q^2})$ acting on~$S$, and since $G$ consists of coset
representatives for $\PGL_2(\BF_q)$ acting on $\PGL_2(\BF_{q^2})$, this is 
clear. Thus, $L$ contains a representative from each orbit of $\PGL_2(\BF_q)$
acting on the set of monic irreducible homogeneous polynomials of degree $n$
in~$\BF_q[x,y]$. In fact, $L$ contains at most two such representatives for each
orbit, because of the uniqueness of $g$ up to $g\leftrightarrow g^{(q)}$.

By construction (and by Theorem~\ref{T:invariant}), the list $L''$ contains at
most one representative from each orbit of $\PGL_2(\BF_q)$ acting on the
irreducible polynomials. But we already saw that it contains at least one such
representative. Therefore, it is a complete list of unique representatives.

The only thing left to check is that the algorithm runs in time 
$\Otilde(q^{n-3})$. Steps \eqref{none} through \eqref{nthree} take time at most
$\Otilde((q^2)^{(n/2 - 2)}) = \Otilde(q^{n-4})$. Step \eqref{nfour} takes time
$O(q^3)$, and step \eqref{nfive} takes time $\Otilde(q^{n-3})$ because there are
$O(q^3)$ elements of $G$ and $O(q^{n-6})$ elements of $M$. Steps \eqref{nsix}
through \eqref{nnine} also take time $\Otilde(q^{n-3})$, because the lists $N$,
$L$, and $L'$ contain $O(q^{n-3})$ elements. 
\end{proof}

\section{Additional efficiencies}
\label{S:speedups}

In this section we mention a few ways that the algorithms in 
Section~\ref{S:enumerating} can be improved. The asymptotic complexity of the
revised algorithms is still $\Otilde(q^{n-3})$, but the speedups in this section
improve the algorithms by constant factors.

\subsection{Avoiding repeated orbits}
Several of our algorithms include a step to deal with elements of 
$\Sym^n(\BF_q)$ that can be normalized (in the manner of the particular 
algorithm) in several ways. This happens in steps~\eqref{1dedupe}(b) and~(c) of 
Algorithm~\ref{A:1}, in steps~\eqref{2adedupe}(b) and~(c) of 
Algorithm~\ref{A:2a}, in steps~\eqref{2bdedupe}(b) and~(c) of 
Algorithm~\ref{A:2b}, and in steps~\eqref{mediumdedupe}(b) and~(c) of 
Algorithm~\ref{A:medium}. In 
Algorithm~\ref{A:medium}, for example, this is needed when there is more than
one occurrence of the number $m_1$ in the Galois type. The algorithm normalizes
elements of $\Sym^n(\BF_q)$ of the given Galois type (represented by monic
homogeneous polynomials of degree~$n$) by absorbing all of the action of 
$\PGL_2(\BF_q)$ into one factor of degree~$m_1$. If there is more than one such
factor, there is more than one normalization of the same polynomial, and the 
algorithm has to identify the resulting repeated orbits and return only one of
them. (There is also more than one normalization if a factor of degree~$m_1$ has
nontrivial $\PGL_2(\BF_q)$ stabilizer, but that is rare.)

The most straightforward way to avoid this situation is to handle Galois types
with $m_1$ occurring more than once in a different way. For instance, if we are
working with the Galois type $(4,4,3,1)$, instead of normalizing on the factors
of degree $4$ (for which there are two choices), we can instead normalize on the
factor of degree~$3$. This technique can be used to handle every Galois type 
that includes at least one value of $m$ that is at least $3$ and that occurs
just once in the type. Similarly, if the value $2$ occurs in a type exactly
twice, we can normalize the product of two irreducible quadratics.

If we have a Galois type with $m_1\ge 3$ where this is impossible --- for 
example, $(4,4,3,3)$ --- we have another option. We use a modified version of
Algorithm~\ref{A:naivelist} where we skip step~\eqref{naivelistdelete}; this
gives us a list of all monic irreducible polynomials of degree~$m_1$, grouped
by their cross polynomials. Then, in Algorithm~\ref{A:medium}, in 
step~\eqref{mediumdedupe} we do not consider \emph{all} sets of distinct 
polynomials $\{f_2,\ldots,f_r\}$; instead, we demand that the cross polynomial
of every $f_i$ whose degree is equal to $m_1$ not appear earlier on the sorted
list that that of~$f_1$. If in fact all of the additional cross polynomials are
different from the cross polynomial of~$f_1$, the orbit representative we obtain
will not be repeated unless the $\PGL_2(\BF_q)$ stabilizer of $f_1$ is
nontrivial, which is unusual (and easy to check). If some of the additional
cross polynomials \emph{are} the same as that of $f_1$, then we keep track of
this orbit on a separate list, and deduplicate this (much smaller) list
separately.

\subsection{Treating the Galois types \texorpdfstring{$(n-1,1)$}{(n-1,1)} and 
            \texorpdfstring{$(n-2,1,1)$}{(n-2,1,1)} more efficiently}
Consider the Galois type $(n-1,1)$, corresponding to a product of a linear 
homogeneous polynomial with an irreducible homogeneous polynomial of
degree~$n-1$. Instead of absorbing all the $\PGL_2(\BF_q)$ action into the 
choice of the irreducible polynomial of degree~$n-1$, we can instead demand
that the linear polynomial have its zero at $\infty$. Then we have to find
representatives for irreducible homogeneous polynomials of degree~$n-1$ up to
the $ax+b$ group. We can accomplish this by modifying the technique of 
Algorithm~\ref{A:naivelist}: We construct a copy of $\BF_{q^{n-1}}$, we choose a
basis $(\beta_1,\ldots,\beta_{n-1})$ such that $\beta_1$ appears with a nonzero
coefficient in the representation of $1$, and then we simply list the minimal
polynomials of elements whose representation on the given basis begins 
$(0,\ldots,0,1,\ldots)$, with at least one $0$ at the beginning and with the
first nonzero element being $1$. We can also take care to produce only one 
element from each Galois orbit at this stage, in order to avoid using cross
polynomials to deduplicate the list later.

For the Galois type $(n-2,1,1)$, we look for orbit representatives of the form
$xyf$ for irreducible homogeneous $f$ of degree $n-2$. We can modify $f$ only
by replacing $(x,y)$  with $(cx,y)$ or $(cy, x)$, so we can construct a copy of 
$\BF_{q^{n-2}}$ with basis $(\beta_1,\ldots,\beta_{n-2})$, list the minimal
polynomials of the elements whose representations on the given basis have their
first nonzero element equal to~$1$, and then deduplicate as usual.

\subsection{More efficient computations of \texorpdfstring{$\PGL_2$}{PGL2}
            orbits of irreducibles}
\label{SS:PGL2orbits}
In step~\eqref{mediumlist} of Algorithm~\ref{A:medium}, we obtain a list of
orbit representatives for irreducible polynomials of a given degree $m$ by using
Algorithm~\ref{A:naivelist}. If $m$ is composite, we can use a more efficient
algorithm based on the idea of Algorithm~\ref{A:n}. Namely, if $m = p m'$, we
can compute orbit representatives for $\PGL_2(\BF_{q^p})$ acting on irreducible
polynomials in $\BF_{q^p}[x]$ of degree $m'$, and then use 
Proposition~\ref{P:CosetRepsPGL2} or Proposition~\ref{P:CosetRepsPGLp} to list
coset representatives for $\PGL_2(\BF_q)$ in $\PGL_2(\BF_{q^p})$. As in 
Algorithm~\ref{A:n}, we can combine these two lists to get a complete list of
unique orbit representatives for $\PGL_2(\BF_q)$ acting on irreducible 
polynomials of degree~$n$.

In fact, what we have just shown is that if $n$ is composite, we can compute a 
complete set of unique representatives for the orbits of $\PGL_2(\BF_q)$ acting
on the monic irreducible homogeneous polynomials of degree $n$ in time 
$\Otilde(q^{n-3})$. This leaves open the case when $n$ is prime. In a followup
paper~\cite{Howe2024}, we explain a completely different technique that will
produce these orbit representatives for prime~$n$ --- indeed, for 
\emph{odd}~$n$ --- in time $\Otilde(q^{n-3})$, but no longer deterministically,
because the method relies on factoring polynomials of bounded degree in 
polynomial time.

\section{Implementations for genus \texorpdfstring{$2$}{2} and genus 
         \texorpdfstring{$3$}{3}}
\label{S:genus2}

We have implemented our algorithm for hyperelliptic curves of genus $2$ and 
genus~$3$ in Magma~\cite{magma}. Magma files with the implementations can be
found in several places: in the ancillary files attached the the arXiv version
of this paper, on the 
\href{https://ewhowe.com/papers/paper58.html}{author's web page}, and in the
GitHub repository associated to this paper~\cite{HoweRepo2024}. In addition to 
the improvements described in Section~\ref{S:speedups} and others of a similar
nature, our code for the genus-$2$ case includes an improvement for the Galois
type $(6)$ that allows us to skip the deduplication step in Algorithm~\ref{A:n}.
The basic idea is to choose the irreducible cubic polynomial in 
step~\eqref{none} of Algorithm~\ref{A:n} so that it lies in $\BF_q[x,y]$ and so 
that its zeros are permuted by the order-$3$ element $\twobytwo{0}{-1}{1}{-1}$
of $\PGL_2(\BF_q)$, so that it is easier to keep track of which elements of 
$\PGL_2(\BF_{q^2})$ in $G$ give rise to homogeneous sextic polynomials 
$f\in\BF_q[x,y]$ that lie in the same $\PGL_2(\BF_q)$ orbit. The details are too
lengthy to include here, but are spelled out in the comments in the code, as 
well as in the followup paper~\cite{Howe2024}. Other improvements are described
in the comments as well.

For our genus-$3$ implementation, we did not spend as much time optimizing, and
there are very likely improvements that can be made.

\begin{table}
\caption{Sample timings (in seconds) to compute all hyperelliptic curves of
genus~$2$ and~$3$ over $\BF_q$. The second column gives timings for Magma's 
built-in routines for genus~$2$. The third through fifth columns give timings
for the techniques of this paper: the third for computing $\PGL_2(\BF_q)$ orbit
representatives of $\Sym^6(\BF_q)$, the fourth for computing genus-$2$ curves
from these representatives, and the fifth for the total time for both. 
Similarly, the sixth through ninth columns give timings for computing genus-$3$
curves. Timings marked with an asterisk are estimates.}
\label{table}
\centering
\begin{tabular}{r r r r r r r r c c}
\toprule
&  \multicolumn{4}{c}{Genus 2}     && \multicolumn{4}{c}{Genus 3}\\
   \cmidrule(lr){2-5}                 \cmidrule(lr){7-10}
&& \multicolumn{3}{c}{This paper} &&& \multicolumn{3}{c}{This paper}\\
\cmidrule(lr){3-5}\cmidrule(lr){8-10}
$q$& 
Magma & \multicolumn{1}{c}{$\Sym^6$} & \multicolumn{1}{c}{Curves} & \multicolumn{1}{c}{Total}&&
Magma & \multicolumn{1}{c}{$\Sym^8$} & \multicolumn{1}{c}{Curves} & \multicolumn{1}{c}{Total}\\
\midrule
 $17$ &      $7.9$  &    $0.16$ &   $0.02$ &    $0.18$ &&    $5274$     & $\pz\pz  20$ & $\pz\pz  1$ & $\pz\pz  21$ \\
 $31$ &     $52.7$  &    $0.77$ &   $0.06$ &    $0.83$ &&   $99463\rs$  &    $\pz 304$ &    $\pz 14$ &    $\pz 318$ \\
 $59$ &    $327.1$  &    $3.85$ &   $0.25$ &    $4.10$ && $2408665\rs$  &       $5932$ &       $479$ &       $6411$ \\
$127$ &   $3308\z$  &   $36\zz$ &   $2\zz$ &   $38\zz$ \\
$257$ &  $27448\sz$ &  $290\zz$ &  $10\zz$ &  $300\zz$ \\
$509$ & $211655\sz $& $2307\zz$ &  $76\zz$ & $2384\zz$ \\
\bottomrule
\end{tabular}
\end{table}

We ran some timing experiments to compare our Magma code to the built-in Magma
functions that implement the algorithms of Mestre~\cite{Mestre1991} and Cardona 
and Quer~\cite{CardonaQuer2005} for genus-$2$ curves, and the algorithms of 
Lercier and Ritzenthaler~\cite{LercierRitzenthaler2012} for genus-$3$ curves. We
give some sample timings in Table~\ref{table}, taken by running Magma (V2.28-8)
on one core of an Apple M1 Max processor with 64GB RAM. For our algorithm, we 
divide our timings into two steps: computation of the $\PGL_2(\BF_q)$ orbit
representatives of $\Sym^{2g+2}(\BF_q)$ and computation of  the isomorphism
classes of curves. Our algorithm includes a computation of the automorphism
groups of the curves, which gives us a consistency check, since the sum over all
hyperelliptic curves of genus~$g$ over $\BF_q$ of $1$ over the size of the
automorphism group is equal to $q^{2g-1}$ 
\cite[Proposition~7.1]{BrockGranville2001}. 

We compare our genus-$2$ timings to those of applying the Magma command

\centerline{\texttt{Twists(HyperellipticCurveFromG2Invariants([a,b,c]))}}

\noindent
to all triples $(a,b,c)$ of elements of $\BF_q$ and then retrieving the
polynomials that define the resulting curves. For $q>127$ we estimate the time 
for the Magma builtin functions by running the above command on $10{,}000$
random triples $(a,b,c)$ and multiplying the time taken by $q^3/10^4$; these
estimates are indicated with asterisks. We see that our genus-$2$ code is
running approximately $90$ times faster than Magma's internals.

We compare our genus-$3$ timings to those of applying the Magma command

\centerline{\texttt{TwistedHyperellipticPolynomialsFromShiodaInvariants(S)}}

\noindent
to all Shioda invariants with nonzero discriminants, obtained by applying

\centerline{\texttt{ShiodaAlgebraicInvariants(V : ratsolve := true)}}

\noindent
to every element $V$ of the $5$-dimensional weighted projective space over
$\BF_q$ with weights $[2,3,4,5,6,7]$ and discarding those with discriminant~$0$. 
For $q=31$ and $q=59$ we estimate Magma's times as before. It appears that our
genus-$3$ code is running several hundred times faster than Magma's internals.

For genus-$2$ curves over $\BF_{509}$, our code spent $42.46\%$ of the time on
Galois type~$(6)$, $16.06\%$ on type $(3,3)$, and $12.55\%$  on type $(5,1)$,
with the remaining $29\%$ of the time divided among the remaining eight types.
For genus-$3$ curves over $\BF_{59}$, our code spent $26.49\%$ of the time on 
Galois type $(7,1)$, $25.77\%$ on type $(8)$, $19.45\%$ on type $(2,2,2,2)$, and
$7.87\%$ on type $(4,4)$, with the remaining $20\%$ of the time divided among 
the remaining eighteen types.

We note that memory handling issues may have slowed the genus-$3$ computation
for $q=59$, which reemphasizes the point, made in the introduction, that it
would be good to have a version of our algorithm for higher genera that requires
less space. We have not yet implemented the low-memory algorithm 
from~\cite{Howe2024} to see whether that will help improve our timings for 
larger~$q$.


\bibliographystyle{hplaindoi}
\bibliography{enumerating}

\end{document}